\theoremstyle{plain}
\newtheorem{teo}{Theorem}[section]
\newtheorem{lema}[teo]{Lemma}
\newtheorem{prop}[teo]{Proposition}
\newtheorem{obs}[teo]{Remark}
\newcommand{\nd}{\noindent}
\newcommand{\vu}{\vspace{.1cm}}
\newcommand{\vd}{\vspace{.2cm}}
\newcommand{\vt}{\vspace{.3cm}}
\begin{document}

\title{Duality for groupoid (co)actions}

\author{Daiana Fl\^ores}
\address{Departamento de Matem\'atica, Universidade Federal de Santa Maria,
97105-900, Santa Maria, RS, Brazil} \email{flores@ufsm.br}
\author{Antonio Paques}
\address{Instituto de Matem\'atica, Universidade Federal do Rio Grande do Sul,
91509-900, Porto Alegre, RS, Brazil} \email{paques@mat.ufrgs.br}

\maketitle

\begin{abstract}
In this paper we present Cohen-Montgomery-type duality theorems for groupoid
(co)actions.
\end{abstract}

\

{\scriptsize{\it Key words and phrases:} groupoid action, groupoid
coaction, Cohen-Montgomery duality.}

{\scriptsize{\it Mathematics Subject Classification:} Primary 16S40, 16W50, 20L05, 22D35.}

\section{Introduction}

Groupoids are usually presented as small categories whose morphisms
are invertible. This notion is a natural extension of the notion of
a group. Notice that a group can be seen as a category with a unique
objet.

\vu

The notion of a groupoid action that we use in this paper arose from
the notion of a partial groupoid action as introduced in \cite{BP},
which is a natural extension
of the notion of a partial group action \cite{DE}. First,
partial ordered groupoid actions on sets were introduced in the
literature, as ordered premorphisms, by N. Gilbert \cite{Gil}. After,
partial ordered groupoid actions on rings were considered
by D. Bagio and the authors \cite{BFP} as a generalization
of partial group actions, as introduced by M. Dokuchaev and R. Exel
in \cite{DE}. And in \cite{BP} this notion was extended to the general
context of groupoids.

\vu

Our purpose is to present a generalization of Cohen-Montgomery duality
Theorem for group actions \cite[Theorem 3.2]{CM} (resp., group coactions
or, equivalently, group grading \cite[Theorem 3.5]{CM}) to the setting
of groupoid actions (resp., groupoid coactions or groupoid grading)
(see Theorems \ref{teo37} and \ref{teo45}).

\vu

This paper is organized as follows. In the next section we give preliminaries about
groupoids, groupoid actions, skew groupoid rings, weak bialgebras, groupoid gradings,
groupoid coactions and weak smash products, these later as introduced by S. Caenepeel
and E. De Groot in \cite{CG1}.  In that section we will be concerned only with  the results strictly
necessary to construct the appropriate environment to prove our main theorems, whose
proofs we set in the sections 3 (actions) and 4 (coactions).

\vu

We deal with the groupoid ring $KG$ and its dual $KG^*$, $K$ being a
commutative ring and $G$ a finite groupoid. The $K$-algebras $KG$ and $KG^*$
are perhaps the first examples of weak bialgebras that are not bialgebras.

Accordingly \cite{BP}, an action of a groupoid $G$ on a $K$-algebra $R$ is a
pair $\beta=(\{E_g\}_{g\in G}, \{\beta_g\}_{g\in G}),$ where for each
$g\in G$, $E_g=E_{gg^{-1}}$ is an ideal of $R$ and $\beta_g:E_{g^{-1}}\to E_g$ is an
isomorphism of rings satisfying some appropriate  conditions (see the subsection 2.3).

If the set $G_0$ of all identities of $G$ is finite then there exists
a one to one correspondence between the structures of left $KG$-module
algebra on $R$ and the actions $\beta$ of $G$ on $R$ such that
each ideal $E_e$ ($e\in G_0$) is unital and $R=\bigoplus_{e\in G_0} E_e$.
In particular, the notion of groupoid action introduced by Caenepeel and
De Groot in \cite{CG} is  equivalent to this previous one
(see Proposition \ref{prop22}).

Given an action $\beta$ of a finite groupoid $G$ on a $K$-algebra $R$
we can consider the skew groupoid ring $R\star_\beta G$ \cite{BFP}, which
is a $G$-graded algebra or, equivalently, a left $KG^*$-module algebra.
The corresponding weak smash product \cite{CG1} $(R\star_\beta G)\# KG^*$
is a nonunital $K$-algebra (see the subsection 2.5).

Given any unital $K$-algebra $A$ graded by a finite groupoid $G$ or,
equivalently, a left $KG^*$-module algebra,
there exists an action $\beta$ of $G$ on the weak smash product $A\# KG^*$
(see Proposition \ref{prop24})
and we can consider the corresponding skew group ring $(A\# KG^*)\star_\beta G$,
which also is a nonunital $K$-algebra.

\vu

We show in the section 3 (resp., section 4) that $(R\star_\beta G)\# KG^*$
(resp., $(A\# KG^*)\star_\beta G$) contains a unital $K$-subalgebra that is
isomorphic to a finite direct sum of matrix $K$-algebras. In particular,
if $G$ is a group we recover \cite[Theorems 3.2 and 3.5]{CM}.

\vu

In \cite{N} D. Nikshych presented a Blattner-Montgomery-type duality for
weak Hopf algebras, generalizing the well known result for Hopf algebras obtained
in \cite{BM} and \cite{vdB}. There exists a natural relation
between $(R\star_\beta G)\# KG^*$ and the double weak smash product
$(R\bigotimes_{KG_t}KG)\bigotimes_{KG^*_t} KG^*$ as
constructed by Nikshych (in the case that this makes sense), and it will be explicitly
given in the section 5.

\vu

Throughout, by ring (or algebra) we mean an associative, not necessarily commutative and
not necessarily unital ring (or algebra).

\section{Preliminary results}

\subsection{Groupoids}

\vu

The axiomatic version of groupoid that we adopt in this paper was
taken from \cite{Lawson}. A {\it groupoid} is a non-empty set $G$
equipped with a partially defined binary operation, that we will
denote by concatenation, for which the usual axioms of a group hold
whenever they make sense, that is:

\begin{itemize}

\item[(i)] For every $g,h,l\in G$, $g(hl)$ exists if and only if
$(gh)l$ exists and in this case they are equal.

\vu

\item[(ii)] For every $g,h,l\in G$, $g(hl)$ exists if and only if $gh$
and $hl$ exist.

\vu

\item[(iii)] For each $g\in G$ there exist (unique) elements
$d(g),r(g)\in G$ such that $gd(g)$ and $r(g)g$ exist and
$gd(g)=g=r(g)g$.

\vu

\item[(iv)] For each $g\in G$ there exists an element $g^{-1}\in G$
such that $d(g)=g^{-1}g$ and $r(g)=gg^{-1}$.

\end{itemize}

\vu

We will denote by $G^2$ the subset of the pairs $(g,h)\in G\times G$
such that the element $gh$ exists.

\vu

An element $e\in G$ is called an {\it identity} of $G$ if
$e=d(g)=r(g^{-1})$, for some $g\in G$. In this case $e$ is called
the {\it domain identity} of $g$ and the {\it range identity} of
$g^{-1}$. We will denote by $G_0$ the set of all identities of $G$
and we will denote by $G_e$ the set of all $g\in G$ such that
$d(g)=r(g)=e$. Clearly, $G_e$ is a group, called the {\it isotropy
(or principal) group associated to $e$}.

\vu

The assertions listed in the following lemma are straightforward
from the above definition. Such assertions will
be freely used along this paper.

\vu

\begin{lema} \label{lema21}

Let $G$ be a groupoid. Then,

\begin{itemize}

\item[(i)] for every $g\in G$, the element $g^{-1}$ is unique
satisfying $g^{-1}g=d(g)$ and $gg^{-1}=r(g)$,

\vu

\item[(ii)] for every $g\in G$, $d(g^{-1})=r(g)$ and $r(g^{-1})=d(g)$,

\vu

\item[(iii)] for every $g\in G$, $(g^{-1})^{-1}=g$,

\vu

\item[(iv)] for every $g,h\in G$, $(g,h)\in G^2$ if and only if $d(g)=r(h)$,

\vu

\item[(v)] for every $g,h\in G$, $(h^{-1},g^{-1})\in G^2$ if and only if
$(g,h)\in G^2$ and, in this case, $(gh)^{-1}=h^{-1}g^{-1}$,

\vu

\item[(vi)] for every $(g,h)\in G^2$, $d(gh)=d(h)$ and $r(gh)=r(g)$,

\vu

\item[(vii)] for every $e\in G_0$, $d(e)=r(e)=e$ and $e^{-1}=e$,

\vu

\item[(viii)] for every $(g,h)\in G^2$, $gh\in G_0$ if and only if
$g=h^{-1}$,

\vu

\item[(ix)] for every $g,h\in G$, there exists $l\in G$ such that
$g=hl$ if and only if $r(g)=r(h)$,

\vu

\item[(x)] for every $g,h\in G$, there exists $l\in G$ such that
$g=lh$ if and only if $d(g)=d(h)$.
\end{itemize}
\end{lema}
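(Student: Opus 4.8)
The plan is to deduce all ten items directly from the four groupoid axioms, verifying them in an order in which each becomes a short computation; the only real subtlety is keeping track of which products are defined, which is governed by axioms (i) and (ii), while a cancellation principle extracted from the uniqueness clause of axiom (iii) supplies the rest.

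I would first dispatch (i), (ii), (iv) and (vi) together, since these form the small interdependent core on which everything else rests. The engine here is the cancellation fact, squeezed out of the uniqueness in axiom (iii) via axioms (i)--(ii), that if $gx$ is defined and $gx=g$ then $x=d(g)$, together with its mirror on the other side. Granting this, (iv) follows: if $d(g)=r(h)$ then $g\,d(g)$ and $d(g)\,h=h$ are defined, so $g(d(g)\,h)$ is defined by (ii) and hence $gh=(g\,d(g))h$ is defined by (i); conversely, from $gh$ defined one produces the defined products $g\,r(h)=(gh)h^{-1}$ and $d(g)\,h=g^{-1}(gh)$ and then invokes the cancellation engine to identify $r(h)=d(g)$. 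For (vi), when $(g,h)\in G^2$ the product $(gh)d(h)=g(h\,d(h))=gh$ is defined, so $d(h)$ plays the role of the domain identity of $gh$ and hence equals $d(gh)$ by uniqueness; $r(gh)=r(g)$ is symmetric. Then (i) comes from the chain $d(g)\,h=h\,r(g)=(hg)g^{-1}=d(g)\,g^{-1}$, all brackets certified by (i)--(ii), which with (iv) forces $h=g^{-1}$; and (ii) drops out of the same manipulation applied to $g^{-1}$, identifying $r(g)$ as $d(g^{-1})$ and $d(g)$ as $r(g^{-1})$. Item (iii) is then immediate, since $g$ satisfies the defining relations $g\,g^{-1}=r(g)$, $g^{-1}\,g=d(g)$ of $(g^{-1})^{-1}$ and so equals it by (i).

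With the core established the remaining items are routine. For (v): $(h^{-1},g^{-1})\in G^2\Leftrightarrow d(h^{-1})=r(g^{-1})\Leftrightarrow r(h)=d(g)\Leftrightarrow(g,h)\in G^2$ by (ii), (iv), and one checks with (i) that $h^{-1}g^{-1}$ satisfies the defining relations of $(gh)^{-1}$, e.g.\ $(gh)(h^{-1}g^{-1})=g\,r(h)\,g^{-1}=g\,g^{-1}=r(g)=r(gh)$ using (iv) and (vi). For (vii): writing $e=d(g)=r(g^{-1})\in G_0$, apply (vi) to $(g^{-1},g)\in G^2$ to get $d(e)=d(g^{-1}g)=d(g)=e$ and $r(e)=r(g^{-1}g)=r(g^{-1})=e$, whence $e\,e=e$ and then $e^{-1}=e$ by (i). For (viii): if $g=h^{-1}$ then $gh=d(h)\in G_0$; conversely if $gh=e\in G_0$ then (vi) and (vii) give $r(g)=e=d(h)$, and $(gh)h^{-1}=g\,r(h)=g$ while $(gh)h^{-1}=e\,h^{-1}=r(h^{-1})\,h^{-1}=h^{-1}$, so $g=h^{-1}$. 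Finally (ix) and (x): the ``only if'' directions are (vi), and for ``if'' take $l=h^{-1}g$ (resp.\ $l=g h^{-1}$), which is defined exactly when $r(g)=r(h)$ (resp.\ $d(g)=d(h)$) by (iv), with $h(h^{-1}g)=(hh^{-1})g=r(h)\,g=r(g)\,g=g$ and symmetrically for (x).

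The main obstacle is isolating and proving the cancellation engine used in (i) and (iv) --- that definedness of a product together with the one-sided relation $gx=g$ pins the factor down as $d(g)$ --- which must be teased out of the uniqueness clause of axiom (iii) before the interdependence among (i), (ii), (iv) can be unwound; everything past that point is bookkeeping with axioms (i) and (ii).
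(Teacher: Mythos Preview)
The paper does not prove this lemma at all: immediately before the statement it says only that ``the assertions listed in the following lemma are straightforward from the above definition,'' and no argument is given. Your proposal is therefore already far more detailed than anything in the paper, and your overall architecture --- settle the core (i), (ii), (iv), (vi) first, then read off (iii), (v), (vii)--(x) --- is the natural one and your handling of the latter group is correct.

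There is, however, a genuine gap in your converse direction of (iv). Your cancellation engine, as you state it, says only that $gx=g$ forces $x=d(g)$ (and its mirror). From the definedness of $gh$ you correctly produce the defined products $g\,r(h)=(gh)h^{-1}$ and $d(g)\,h=g^{-1}(gh)$, but the engine does not apply: you have not shown $g\,r(h)=g$ or $d(g)\,h=h$, only that these products exist. What is actually needed is the stronger fact that an identity acts as a genuine two-sided unit on anything it can be multiplied with, i.e.\ if $e\in G_0$ and $ex$ is defined then $ex=x$. This is provable from the axioms but takes a few more lines than your engine: first get (vi) directly (your argument for (vi) is fine and uses only the axioms), then observe $ee=e$ and hence $d(e)=r(e)=e$; now $(ex)(ex)^{-1}=r(ex)=r(e)=e$, so by axiom (ii) the product $x(ex)^{-1}$ exists and $e\bigl(x(ex)^{-1}\bigr)=e$, whence $x(ex)^{-1}=d(e)=e$ by your engine; finally
\[
ex=(ee)x=e(ex)=\bigl(x(ex)^{-1}\bigr)(ex)=x\bigl((ex)^{-1}(ex)\bigr)=x\,d(ex)=x\,d(x)=x.
\]
With this in hand the converse of (iv) is immediate: $gh$ defined gives $d(g)\,h$ defined, hence $d(g)\,h=h$, hence $d(g)=r(h)$ by the mirror of your engine. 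The rest of your sketch then goes through as written.
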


\vu

\subsection{Weak bialgebras: the finite groupoid algebra and its dual}
Hereafter $K$ will denote a unital commutative ring and unadorned
$\otimes$ will mean $\otimes_K$. Following \cite{BNS}, a weak
$K$-bialgebra $H$ is a unital $K$-algebra,
with a $K$-coalgebra structure $(\Delta,\epsilon)$ such that
\begin{itemize}
\item[(i)] $\Delta(xy)=\Delta(x)\Delta(y)$

\vu

\item[(ii)] $\Delta^2(1_H)=(\Delta(1_H)\otimes
1_H)(1_H\otimes\Delta(1_H))=(1_H\otimes\Delta(1_H))(\Delta(1_H)\otimes
1_H)$

\vu

\item[(iii)]
$\varepsilon(xyz)=\sum\varepsilon(xy_1)\varepsilon(y_2z)=\sum\varepsilon(xy_2)\varepsilon(y_1z)$,
\end{itemize}
for all $x,y,z\in H$, where $\Delta^2=(\Delta\otimes
I_H)\circ\Delta=(I_H\otimes\Delta)\circ\Delta$ and $I_H$ denotes the
identity map of $H$. We use the Sweedler-Heyneman notation for the
comultiplication, namely $\Delta(x)=\sum x_1\otimes x_2,$ for all
$x\in H$.

If $H$ is a bialgebra, that is, the maps $\Delta$ and $\varepsilon$
are homomorphisms of algebras, then the above axioms (i)-(iii) are
trivially satisfied.  Here we are concern with the algebra $KG$ of a
groupoid $G$ and its dual in the case that $G$ is finite. Both are
weak bialgebras but not bialgebras.

\vu

Given a groupoid $G$, the groupoid algebra $KG$ is free as a
$K$-module with basis $\{u_g\ |\ g\in G\}$, its multiplication is
given by the rule
\[
u_gu_h=
\begin{cases}
u_{gh} &\text{if}\,\ \ d(g)=r(h)\\
0  &\text{otherwise},
\end{cases}
\]
for all $g,h\in G$, its identity element exists and is $1_{KG}=\sum_{e\in
G_0}u_e$ if and only if $G_0$ is finite \cite{Lun}, and its $K$-coalgebra
structure is given by $$\Delta(u_g)=u_g\otimes u_g\quad\text{and}\quad \varepsilon(u_g)=1_K,$$
for all $g\in G$.

\vu

The dual $KG^\ast$ of $KG$, as a $K$-module, is free with dual basis
$\{v_g\ |\ g\in G\}$, that is, $v_g(u_h)=\delta_{g,h}1_K$, for all
$g,h\in G$. If $G$ is finite, its $K$-algebra structure is given by
$$v_gv_h=\delta_{g,h}v_g\quad\text{and}\quad\sum_{g\in G}v_g=1_{KG^\ast},$$ and its
$K$-coalgebra structure is given by
$$\Delta(v_g)=\sum_{hl=g}v_h\otimes
v_l=\sum_{d(h)=d(g)}v_{gh^{-1}}\otimes
v_h\quad\text{and}\quad\varepsilon(\sum_{g\in G}a_gv_g)=\sum_{e\in
G_0}a_e.$$

\vu

\subsection{Groupoid actions}

Let $G$ be a groupoid and $R$ a not necessarily unital ring. Following \cite{BP}, an action
of $G$ on $R$ is a pair
$$\beta=(\{E_g\}_{g\in G}, \{\beta_g\}_{g\in G}),$$ where for each
$g\in G$, $E_g=E_{r(g)}$ is an ideal of $R$,
$\beta_g:E_{g^{-1}}\to E_g$ is an isomorphism of rings, and the following conditions hold:
\begin{itemize}
\item[(i)] $\beta_e$ is the identity map $I_{E_e}$ of $E_e$, for all $e\in G_0$,

\vu

\item[(ii)] $\beta_g\beta_h(r)=\beta_{gh}(r)$, for all $(g,h)\in G^2$ and
$r\in E_{h^{-1}}=E_{(gh)^{-1}}$.
\end{itemize}

\vu

\nd In particular, $\beta$ induces an action of the group $G_e$ on $E_e$, for every $e\in G_0$.

\vd

In \cite{CG} Caenepeel and De Groot developed a Galois theory for
weak bialgebra actions on algebras. In particular, they
considered the situation where the weak bialgebra is a finite
groupoid algebra and a notion of groupoid action was introduced.
Actually, this later notion and the one above defined, under some additional conditions,
are equivalent, as we will see in the next proposition.

\vu

Following \cite[section 4]{CG}, a $KG$-module algebra is a unital $K$-algebra $R$,
with a left $KG$-module structure given by $\cdot:KG\otimes R\to R$, $u_g\otimes x\mapsto u_g\cdot x$,
such that:
$$u_g\cdot (xy)=(u_g\cdot x)(u_g\cdot y)\quad\text{and}\quad u_g\cdot 1_R=u_{r(g)}\cdot 1_R,$$
for all $x, y\in R$ and $g\in G$.

\vd

\begin{prop}\label{prop22} Let $G$ be a groupoid such that $G_0$ is finite, and $R$ be a unital $K$-algebra. Then
the following statements are equivalent:
\begin{itemize}

\vu

\item[(i)] There exists an action $\beta=(\{E_g\}_{g\in G},
\{\beta_g\}_{g\in G})$ of $G$ on $R$ such that every $E_e$, $e\in
G_0$, is unital and $R=\bigoplus\limits_{e\in G_0}E_e.$

\vu

\item[(ii)] $R$ has an structure of $KG$-module algebra.
\end{itemize}
\end{prop}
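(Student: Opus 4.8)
The plan is to prove the equivalence by constructing the two structures from each other and checking that all the axioms translate correctly, exploiting the fact that $KG$ is free with basis $\{u_g\}$ and that the idempotents $u_e\cdot 1_R$ ($e\in G_0$) will decompose $R$.

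First, for the direction (i)$\Rightarrow$(ii): given the action $\beta$, I would define the $KG$-module structure on $R$ by setting, for $g\in G$ and $x\in R$, $u_g\cdot x = \beta_g(1_{E_{g^{-1}}}\, x_{d(g)})$, where $x = \sum_{e\in G_0} x_e$ is the decomposition coming from $R=\bigoplus_e E_e$ and $1_{E_{g^{-1}}}$ is the unit of the unital ideal $E_{g^{-1}} = E_{d(g)}$; equivalently $u_g$ acts as $\beta_g$ on the $E_{d(g)}$-component and as zero on the other components, then extend $K$-linearly. I would check: (a) this is a well-defined left module action, i.e. $u_g\cdot(u_h\cdot x) = (u_gu_h)\cdot x$ for all $g,h$ — when $d(g)=r(h)$ this is axiom (ii) of the action together with the fact that $\beta_h$ maps $E_{h^{-1}}$ onto $E_h = E_{r(h)} = E_{d(g)}$, and when $d(g)\neq r(h)$ both sides are zero; (b) $u_g\cdot(xy)=(u_g\cdot x)(u_g\cdot y)$, which follows since each $\beta_g$ is a ring homomorphism and $E_{d(g)}$ is an ideal so projecting onto it is multiplicative on that component; (c) $u_g\cdot 1_R = \beta_g(1_{E_{g^{-1}}}) = 1_{E_g} = 1_{E_{r(g)}} = u_{r(g)}\cdot 1_R$, using axiom (i) for the last equality.

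For the converse (ii)$\Rightarrow$(i): given the $KG$-module algebra structure, set $1_e := u_e\cdot 1_R$ for $e\in G_0$. Using $u_eu_f=\delta_{e,f}u_e$ one gets $1_e1_f=\delta_{e,f}1_e$, so the $1_e$ are orthogonal idempotents; since $G_0$ is finite, $\sum_{e\in G_0}u_e = 1_{KG}$ acts as the identity, hence $\sum_e 1_e = 1_R$, and one verifies each $1_e$ is central (because $u_e\cdot(x\cdot 1) = (u_e\cdot x)(u_e\cdot 1)$ and $u_e\cdot x$ lies in... here one checks $1_e x = (u_e\cdot 1_R)x$; centrality follows from the module-algebra axioms applied with $g=e$). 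Define $E_e := 1_eR = Re = e R e$, a unital ideal with unit $1_e$, so that $R = \bigoplus_{e\in G_0}E_e$. For general $g\in G$ put $E_g := E_{r(g)}$ and define $\beta_g : E_{d(g)}\to E_{r(g)}$ by $\beta_g(x) = u_g\cdot x$; one checks this lands in $E_{r(g)}$ using $u_g\cdot x = u_g\cdot(1_{d(g)}x)$ and $(u_gu_{d(g)})\cdot 1_R = u_g\cdot 1_R = 1_{r(g)}$-type identities, that it is a ring isomorphism with inverse $\beta_{g^{-1}}$ (by axiom (ii)-analogue $u_g\cdot(u_{g^{-1}}\cdot x) = u_{d(g^{-1})}\cdot x = u_{r(g)}\cdot x = x$ for $x\in E_{r(g)}$), and that conditions (i) and (ii) in the definition of a groupoid action hold — (i) because $u_e$ acts as an idempotent which is the identity on its image $E_e$, and (ii) because $u_g\cdot(u_h\cdot x) = (u_gu_h)\cdot x = u_{gh}\cdot x$ when $(g,h)\in G^2$.

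The main obstacle I expect is bookkeeping around the non-unital ideals and making the two constructions genuinely mutually inverse: one must be careful that $\beta_g$ defined as "$u_g$ acts" really is a bijection $E_{d(g)}\to E_{r(g)}$ (not just a map into $R$), which hinges on showing $u_g\cdot x \in E_{r(g)}$ and $u_g\cdot x = u_g\cdot(1_{d(g)}x)$ — these require the identities $u_g = u_gu_{d(g)} = u_{r(g)}u_g$ in $KG$ combined with the module-algebra axiom $u_g\cdot 1_R = u_{r(g)}\cdot 1_R$ and centrality of the $1_e$. Once the idempotent decomposition and these "support" identities are nailed down, verifying the axioms on each side is routine, and checking that starting from one structure, passing to the other, and back returns the original data is a direct computation on basis elements $u_g$.
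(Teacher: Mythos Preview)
Your proposal is correct and follows essentially the same route as the paper: in both directions the constructions are identical (set $u_g\cdot r=\beta_g(r1_{g^{-1}})$ going one way, and $1_e=u_e\cdot 1_R$, $E_g=R1_{r(g)}$, $\beta_g=u_g\cdot(-)$ going the other). The only notable difference is that the paper outsources the verification that the $1_e$ are central orthogonal idempotents with $R=\bigoplus_e E_e$ to \cite[Proposition~4.1]{CG}, whereas you propose to check it directly; your sketch of centrality is a bit telegraphic, but it goes through once you combine $u_f\cdot 1_e=\delta_{e,f}1_e$ with $\sum_f u_f=1_{KG}$ acting as the identity to get $1_e x = u_e\cdot x = x1_e$.
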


\begin{proof}\, (i)$\Rightarrow$(ii)\,  Let $1_g$ denote the
identity element of $E_g$, for every $g\in G$.
It easily follows from the assumptions that each $1_g$ is
a central idempotent of $R$, $1_R=\sum_{e\in G_0} 1_e$ and
$E_e\bigcap E_f=0=E_eE_f$, for all $e\neq f$.

Consider now the action of $KG$ over $R$ given by
$$u_g\cdot r=\beta_g(r1_{g^{-1}}),$$ for every $g\in G$ and $r\in R$.
Such an action induces on $R$ an structure of $KG$-module. Indeed,
$$1_{KG}\cdot r=\sum_{e\in G_0}u_e\cdot r=\sum_{e\in G_0}\beta_e(r1_e)=
\sum_{e\in G_0} r1_e=r\sum_{e\in G_0}1_e=r1_R=r,$$ for all $r\in R$.
Furthermore, it follows from the items (ii),(iv) and (vi) of Lemma \ref{lema21}
that $E_{(gh)^{-1}}=E_{h^{-1}}$,
$E_h=E_{g^{-1}}$, $E_g=E_{gh}$, for all $(g,h)\in G^2$. Hence,
\[
\begin{array}{ccl}
u_g\cdot(u_h\cdot r)&=&\beta_g(\beta_h(r1_{h^{-1}})1_{g^{-1}})=
\beta_g(\beta_h(r1_{h^{-1}}))\\
&=&\beta_{gh}(r1_{h^{-1}})=
\beta_{gh}(r1_{(gh)^{-1}})\\
&=&u_{gh}\cdot r=u_gu_h\cdot r,\
\end{array}
\]
for all $r\in R$ and $(g,h)\in G^2$. For $(g,h)\not\in G^2$ it is
trivial to check that $u_g\cdot (u_h\cdot r)=0=u_gu_h\cdot r$.
So, $R$ is a left $KG$-module.

\vu

Now, since $$u_g\cdot (rs)=\beta_g(rs1_{g^{-1}})=
\beta_g(r1_{g^{-1}})\beta_g(s1_{g^{-1}})=(u_g\cdot r)(u_g\cdot s)$$

\nd and $$u_g\cdot 1_R=\beta_g(1_R1_{g^{-1}})
=1_g=1_{r(g)}=\beta_{r(g)}(1_R1_{r(g)^{-1}})=u_{r(g)}\cdot 1_R,$$
for all $g\in G$ and $r,s\in R$, the required follows.

\vt

(ii)$\Rightarrow$(i)\, Put $1_g=u_g\cdot 1_R$ and $E_g=R1_g$, for
every $g\in G$. So, $1_g=1_{r(g)}$ and by  \cite[Proposition 4.1]{CG}
these elements are central orthogonal idempotents in $R$, and
$R=\bigoplus_{e\in G_0} E_e$. Clearly, each $E_g$, $g\in G$, is an ideal of $R$ and a unital
ring.  Let $\beta_g:E_{g^{-1}}\to E_g$ given by $\beta_g(r)=u_g\cdot r$,
for every $r\in E_{g^{-1}}$ and $g\in G$. It is immediate from the assumptions that $\beta_g$
is a well defined isomorphism of rings with $\beta_g^{-1}=\beta_{g^{-1}}$. Furthermore,
$$\beta_e(r)=\sum_{e'\in G_0}\beta_{e'}(r1_{e'})=
\sum_{e'\in G_0}u_{e'}\cdot r=1_{KG}\cdot r=r,$$ for every $r\in E_{e^{-1}}=E_e$,
and
$$\beta_g(\beta_h(r))=u_g\cdot (u_h\cdot r)=u_{gh}\cdot(r)=\beta_{gh}(r),$$
for every $(g,h)\in G^2$ and $r\in E_{h^{-1}}=E_{(gh)^{-1}}$. The proof is complete.
\end{proof}

\vu

\subsection{The skew groupoid ring}

Let $R$, $G$ and $\beta=(\{E_g\}_{g\in G}, \{\beta_g\}_{g\in G})$ be as in the previous
subsection.
Accordingly \cite[Section 3]{BFP}, the skew groupoid ring
$R\star_\beta G$ corresponding to $\beta$ is defined as the direct
sum
$$R\star_\beta G=\bigoplus_{g\in G}E_g\delta_g$$ in which the
$\delta_g$'s are symbols, with the usual addition, and
multiplication determined by the rule
\[
(x\delta_g)(y\delta_h)=
\begin{cases}
x\beta_g(y)\delta_{gh} &\text{if $(g,h)\in G^2$}\\
0  &\text{otherwise},
\end{cases}
\]
for all $g, h\in G$, $x\in E_g$ and $y\in E_h$.

\vu

This multiplication is well defined. Indeed, if $(g,h)\in G^2$ then $d(g)=r(h)$
(see Lemma \ref{lema21}(iv)). So, $E_{g^{-1}}=E_{r(g^{-1})}=E_{d(g)}=E_{r(h)}=E_h$,
$\beta_g(y)$ makes sense, and $x\beta_g(y)\in E_g=
E_{r(g)}\overset{(\star)}{=}E_{r(gh)}=E_{gh}$, where the equality $(\star)$ is ensured
by Lemma \ref{lema21}(vi).

\vd

By a routine calculation one easily sees that $A=R\star_\beta G$ is associative,
and by \cite[Proposition 3.3]{BFP} it is unital if $G_0$ is finite and $E_e$ is unital for all
$e\in G_0$.
In this case the identity element of $A$ is $1_A=\sum_{e\in G_0}1_e\delta_e$,
where $1_e$ denotes the identity element of $E_e$, for all $e\in G_0$.

\vu

\begin{obs}\label{obs23} Proposition 3.3 in \cite{BFP} asserts that $A=R\star_\beta G$ is unital
if and only if $G_0$ finite. Unfortunately, the existence of the identity element $1_A$ does not
necessarily imply  that $G_0$ is finite, as we will see in the next subsection (note that, in particular,
$A$ is a $G$-graded algebra by construction).
\end{obs}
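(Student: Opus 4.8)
The plan is to justify the remark by exhibiting an explicit counterexample to the converse direction of \cite[Proposition 3.3]{BFP}: the implication ``$G_0$ finite $\Rightarrow R\star_\beta G$ unital'' is unaffected, so only ``$R\star_\beta G$ unital $\Rightarrow G_0$ finite'' has to be refuted. The construction I would use is the most degenerate one possible. Let $G$ be an infinite set regarded as a groupoid in which every element is an identity, so that $d(g)=r(g)=g$ and $g^{-1}=g$ for all $g\in G$, the product $gh$ is defined only when $g=h$, and then $gg=g$; thus $G=G_0$ is infinite. Fix one element $e_0\in G$, let $R$ be a nonzero unital ring, and define $\beta=(\{E_g\}_{g\in G},\{\beta_g\}_{g\in G})$ by $E_{e_0}=R$, $E_g=0$ for all $g\neq e_0$, and $\beta_g=I_{E_g}$ for every $g$. (One could instead take $G=H\sqcup X$ with $H$ a nontrivial group and $X$ an infinite set of extra identities, and put a genuine $H$-action on $R$, so that $R\star_\beta G$ becomes an honest skew group ring; but the discrete version already suffices.)

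First I would verify that $\beta$ is an action of $G$ on $R$ in the sense of Subsection 2.3: $E_g=E_{r(g)}$ holds since $r(g)=g$; each $\beta_g\colon E_{g^{-1}}\to E_g$ is the identity isomorphism because $E_{g^{-1}}=E_g$; axiom (i) is $\beta_e=I_{E_e}$, true by definition; and axiom (ii), $\beta_g\beta_h(r)=\beta_{gh}(r)$ for $(g,h)\in G^2$, holds because $(g,h)\in G^2$ forces $g=h$. Next I would compute $R\star_\beta G=\bigoplus_{g\in G}E_g\delta_g=R\delta_{e_0}$, with multiplication $(x\delta_{e_0})(y\delta_{e_0})=x\beta_{e_0}(y)\delta_{e_0}=xy\delta_{e_0}$; hence $R\star_\beta G\cong R$ as a $K$-algebra and is, in particular, unital with identity $1_R\delta_{e_0}$, although $G_0=G$ is infinite. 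This is the required counterexample. I would then point out that it does not contradict the useful direction of \cite[Proposition 3.3]{BFP}, and that the converse does become correct once one additionally assumes, say, $R=\bigoplus_{e\in G_0}E_e$ (or merely $E_e\neq0$ for every $e\in G_0$).

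I do not expect a computational obstacle here; the delicate point is conceptual, and I would make it explicit. For $A=R\star_\beta G=\bigoplus_{g\in G}E_g\delta_g$, any identity element is a \emph{finite} sum $1_A=\sum_{i=1}^n x_i\delta_{g_i}$, and multiplying on the right by an arbitrary element $y\delta_g$ shows that $E_g\delta_g$ is annihilated whenever $g\notin\{g_1,\dots,g_n\}$; hence if $A$ is unital then $E_g=0$ for all but finitely many $g\in G$, and in particular for all but finitely many $e\in G_0$. So the phenomenon noted in the remark is unavoidable: whenever $R\star_\beta G$ is unital with $G_0$ infinite, the action must be concentrated on a finite part of $G$, exactly as in the example above. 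Finally, since $R\star_\beta G$ is $G$-graded with $g$-component $E_g\delta_g$, the same example shows that a unital $G$-graded $K$-algebra may have only finitely many nonzero homogeneous components while $G$ (hence $G_0$) is infinite --- a subtlety worth keeping in mind for the grading discussion of Sections 3 and 4.
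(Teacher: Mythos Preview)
Your argument is correct. The paper itself does not supply a proof of this remark at all: it simply forward-references Subsection~2.5, where Remark~\ref{obs24} (quoting results of Liu--Li) records that in any unital $G$-graded algebra only finitely many identity components can be nonzero, and the identity element is the sum of the corresponding local identities. From this one is meant to infer, without an explicit example, that unitality of $R\star_\beta G$ constrains only the set $\{e\in G_0 : E_e\neq 0\}$, not $G_0$ itself.

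Your route is more concrete: you produce an explicit action of an infinite discrete groupoid concentrated on a single identity, so that $R\star_\beta G\cong R$ is unital while $G_0$ is infinite, and you also give the elementary direct argument (multiplying a putative finite identity $\sum x_i\delta_{g_i}$ against $y\delta_g$) that unitality forces $E_g=0$ for all but finitely many $g$. This is exactly the content behind Remark~\ref{obs24}(i)--(ii), proved from scratch rather than cited. The two approaches agree in substance; yours has the advantage of being self-contained and of exhibiting the minimal counterexample, while the paper's has the advantage of situating the observation in the general graded-algebra framework it will use later. Your closing comment, that the converse in \cite{BFP} becomes true under the additional hypothesis $E_e\neq 0$ for every $e\in G_0$ (or $R=\bigoplus_{e\in G_0}E_e$), is the right diagnosis of what was implicitly assumed there.
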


\vu

\subsection{Groupoid gradings, coations and weak smash products}

Let $G$ be a groupoid and $A$ a not necessarily unital $K$-algebra. We say that $A$
is a  $G$-graded algebra if there exists a family  $\{A_g\}_{g\in G}$ of $K$-submodules
of $A$ such that $$A=\bigoplus_{g\in G}A_g$$ and
\[
A_gA_h
\begin{cases}
\subseteq A_{gh} &\text{if $(g,h)\in G^2$}\\
0  &\text{otherwise},
\end{cases}
\]
for all $g,h\in G$. It easily follows from this definition that each $A_e$, $e\in G_0$, is a
subalgebra of $A$.

\vu

\begin{obs}\label{obs24}
If $A$ is assumed to be unital, then it follows from
\cite[Propositions 2.2 and 2.3, and Remark 2.4]{LL} that:

\begin{itemize}

\item[(i)] the set $J_0=\{e\in G_0\ \mid\ A_e\neq 0\}$ is finite,

\item[(ii)] $A_g=0$ for all $g\in G$ such that either $d(g)$
or $r(g)$ does not belong to $J_0$,

\item[(iii)] every $A_e$, with $e\in J_0$, is unital,

\item[(iv)] $1_A=\sum_{e\in J_0}1_e$, with $1_e$ denoting the identity element
of $A_e$, for all $e\in J_0$.

\item[(v)] for every $e\in J_0$ and $g\in G$ such that $r(g)=e$ (resp., $d(g)=e$),
$1_ea_g=a_g$ (resp., $a_g1_e=a_g$), for all $a_g\in A_g$.
\end{itemize}
\end{obs}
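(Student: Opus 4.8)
The plan is to prove all five assertions directly from the grading, using only the groupoid axioms collected in Lemma~\ref{lema21} together with uniqueness of the identity element of $A$ and directness of the decomposition $A=\bigoplus_g A_g$; in fact I would not need to quote \cite{LL} at all. First I would write $1_A=\sum_{g\in G}\epsilon_g$ with $\epsilon_g\in A_g$ and record that, since $1_A$ lies in the direct sum, the support $F=\{g\in G\mid\epsilon_g\neq 0\}$ is finite.

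The heart of the argument is a single refinement of the unit axiom: for every homogeneous $a_g\in A_g$ one should have $\epsilon_{r(g)}a_g=a_g=a_g\epsilon_{d(g)}$. To get the left-hand equality I would expand $a_g=1_Aa_g=\sum_h\epsilon_h a_g$, note that $\epsilon_h a_g\in A_{hg}$ when $d(h)=r(g)$ and vanishes otherwise, so that $a_g=\sum_{d(h)=r(g)}\epsilon_h a_g$ is a sum of elements sitting in pairwise distinct homogeneous components (distinct because $hg=h'g$ forces $h=h'$ by right cancellation in the groupoid), and finally compare components using $a_g\in A_g=A_{r(g)g}$. The right-hand equality is the mirror image, starting from $a_g=a_g1_A$ and using left cancellation. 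This is the step I expect to be the only delicate one: one must be careful about exactly when the products $\epsilon_h a_g$ survive, and must check that the degrees $hg$ are pairwise distinct so that directness of the grading may be invoked.

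Once that identity is in hand the rest is bookkeeping. Specializing to $g=e\in G_0$ gives $\epsilon_e x=x=x\epsilon_e$ for all $x\in A_e$, with $\epsilon_e\in A_e$, so $A_e$ is unital with $1_e=\epsilon_e$; moreover $A_e\neq 0$ forces $\epsilon_e\neq 0$, whence $J_0\subseteq F$ is finite. This settles (i) and (iii), and (v) is just the displayed identity rewritten with $1_e=\epsilon_e$. For (iv) I would set $p=\sum_{e\in J_0}\epsilon_e$ (which equals $\sum_{e\in G_0}\epsilon_e$, since $\epsilon_e=0$ when $A_e=0$) and check, using the displayed identity together with the fact that $eg=g$ exactly when $e=r(g)$, that $pa_g=\epsilon_{r(g)}a_g=a_g$ and symmetrically $a_gp=a_g$ for every homogeneous $a_g$; thus $p$ is a two-sided identity of $A$, so $p=1_A$ by uniqueness, and comparing with $1_A=\sum_g\epsilon_g$ inside the direct sum forces $\epsilon_g=0$ for all $g\notin G_0$, i.e. $1_A=\sum_{e\in J_0}1_e$. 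Finally, for (ii), if $d(g)\notin J_0$ then $\epsilon_{d(g)}=0$, and the displayed identity gives $a_g=a_g\epsilon_{d(g)}=0$ for every $a_g\in A_g$, so $A_g=0$; the case $r(g)\notin J_0$ is symmetric.
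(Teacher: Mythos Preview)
Your argument is correct and complete. The paper, however, does not prove this remark at all: it simply records the five assertions as consequences of \cite[Propositions~2.2 and~2.3, and Remark~2.4]{LL} and moves on. So there is no ``paper's proof'' to compare step by step; what you have written is a genuine alternative, namely a self-contained derivation from the grading axioms and the groupoid identities of Lemma~\ref{lema21}.

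The key difference in approach is that you isolate the single identity $\epsilon_{r(g)}a_g=a_g=a_g\epsilon_{d(g)}$ and deduce everything from it, using cancellation in the groupoid to guarantee that the homogeneous degrees $hg$ (for $d(h)=r(g)$) are pairwise distinct so directness applies. This is exactly the right pivot, and your bookkeeping from there (in particular the uniqueness-of-identity argument showing $\epsilon_g=0$ for $g\notin G_0$) is clean. The benefit of your route is that it removes the external dependence on \cite{LL} and makes Remark~\ref{obs24} internally verifiable; the paper's route keeps the exposition short at the cost of that dependence. Either way, nothing in your proposal needs fixing.
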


\vu

Recall from \cite{Böhm} and \cite{CG1} that a right $H$-comodule algebra $A$ ($H$ being
a weak bialgebra) is a unital $K$-algebra with a right $H$-comodule structure given by the coaction
$\rho:A\to A\otimes H$  such that $\rho(ab)=\rho(a)\rho(b)$, for all $a,b\in A$, and
$(\rho\otimes I_H)\circ\rho(1_A)=(\rho(1_A)\otimes1_H)(1_A\otimes\Delta(1_H))$

\vd

In all what follows we will assume that $A$ is a unital $K$-algebra and $G$ is finite.

\vu

The existence of a $G$-grading on $A$ is equivalent to
say that $A$ has an structure of a right $KG$-comodule algebra (see \cite[Proposition 3.1]{CG}),
with coation given by $\rho(a)=\sum_{g\in G}a_g\otimes u_g$, for all
$a=\sum_{g\in G}a_g\in A$. This is also equivalent to say that $A$ is a $KG^*$-module algebra,
with the action given by $v_h\cdot a=a_h$, for all $a\in A$ and $h\in G$.

\vu

Hence, we can consider the weak smash product $A\# KG^*$ of $A$ by $KG^*$
(see \cite[section 3]{CG1}), which is equal to $A\otimes KG^*$ as $K$-modules
and the multiplication is given by the following rule:
\[
(a\# v_g)(b\#v_h)=
\begin{cases}
a(v_{gh^{-1}}\cdot b)\#v_h &\text{if $d(h)=d(g)$}\\
0  &\text{otherwise}.
\end{cases}
\]
A routine calculation easily shows that such a multiplication is associative.
Also, $A\# KG^*$ is not unital. To see this it is enough to verify that
any element of the type $x=b\# v_h$, with $b\in A_k$ and $d(k)\neq r(h)$, is a right annhilator
of $A\# KG^*$. Indeed, in this case $gh^{-1}\neq k$ by Lemma \ref{lema21}(x) and therefore
$v_{gh^{-1}}\cdot b=0$ which implies $(a\# v_g)x=0$ for all $a\in A$ and $g\in G$.

\vu

The element $u=1_A\#1_{KG^*}$ is a preunit of $A\# KG^*$, that is, $ux=xu=xu^2$,
for all $x\in A\# KG^*$ (see \cite[section 3]{CG1}).

\vd

In the sequel we will show that there also exists an action $\beta$ of the
groupoid $G$ on $A\# KG^*$, which allows us to construct the skew groupoid
ring $(A\# KG^*)\star_\beta G$.

\vu

Put $B=A\# KG^*$ and, for each $g\in G$, let $$E_g=\bigoplus\limits_{{l,k\in G}\atop
{d(k)=r(g)}}A_l\#v_k$$ and $$\beta_g:E_{g^{-1}}\to E_g\quad\text{given by}\quad
\beta_g(a_l\#v_k)=a_l\#v_{kg^{-1}}.$$

\vd

\begin{prop}\label{prop24} The pair $\beta=(\{E_g\}_{g\in G}, \{\beta_g\}_{g\in G})$ is
an action of $G$ on $B$, and $B=\bigoplus\limits_{e\in G_0}E_e$.
\end{prop}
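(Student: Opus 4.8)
The plan is to verify directly that the data $\beta=(\{E_g\}_{g\in G},\{\beta_g\}_{g\in G})$ meets all the requirements in the definition of a groupoid action (subsection 2.3), and then to check the decomposition $B=\bigoplus_{e\in G_0}E_e$. First I would record the basic structural facts about $B=A\# KG^*$: as a $K$-module $B=\bigoplus_{l,k\in G}A_l\# v_k$, and by Remark \ref{obs24} (applied to the $G$-graded algebra $A$) the only nonzero homogeneous components $A_l$ are those with $d(l),r(l)\in J_0$, with each $A_e$ unital for $e\in J_0$. I would also isolate the multiplication rule of $B$ restricted to the relevant pieces, namely that $(a_l\# v_k)(b_m\# v_n)$ is supported only when $d(n)=d(k)$ and then equals $a_l(v_{kn^{-1}}\cdot b_m)\# v_n = a_l b_m \# v_n$ when moreover $kn^{-1}=m$ (so $r(k)=r(m)=$ appropriate identity), and is $0$ otherwise.

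Next I would check that each $E_g$ is a two-sided ideal of $B$ and that $E_g=E_{r(g)}$. The equality $E_g=E_{r(g)}$ is immediate from the defining formula, since the index condition ``$d(k)=r(g)$'' depends on $g$ only through $r(g)$ and $r(r(g))=r(g)$ by Lemma \ref{lema21}(vii). For the ideal property, a product $(a_l\# v_k)(b_m\# v_n)$ landing in $A_?\# v_n$ only requires controlling $d(n)$: if the right factor $b_m\# v_n$ lies in $E_g$, i.e. $d(n)=r(g)$, then the product (when nonzero) again lies in $E_g$; symmetrically on the other side the surviving term has second leg $v_n$ with $d(n)=d(k)=r(g)$, so it again lies in $E_g$. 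Then I would verify that $\beta_g$ is a well-defined ring isomorphism $E_{g^{-1}}\to E_g$: well-definedness needs $k g^{-1}$ to exist and have the correct domain when $d(k)=r(g^{-1})=d(g)$, which follows from Lemma \ref{lema21}(iv),(vi) since $d(kg^{-1})=d(g^{-1})=r(g)$; its inverse is visibly $\beta_{g^{-1}}$ using Lemma \ref{lema21}(iii); and multiplicativity reduces to the observation that $\beta_g$ only alters the $KG^*$-leg by right translation $v_k\mapsto v_{kg^{-1}}$, which is compatible with the multiplication formula above because the condition ``$d(h)=d(k)$'' and the value $v_{kh^{-1}}\cdot b$ are both unchanged under $k\mapsto kg^{-1}$, $h\mapsto hg^{-1}$.

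Then I would check the two axioms (i) and (ii) for an action. Axiom (i), that $\beta_e=I_{E_e}$ for $e\in G_0$, is immediate since $e^{-1}=e$ and $ke = k$ whenever $d(k)=r(e)=e$ (so $v_{ke}=v_k$). Axiom (ii), $\beta_g\beta_h=\beta_{gh}$ on $E_{h^{-1}}=E_{(gh)^{-1}}$ for $(g,h)\in G^2$, follows by applying the formula twice: $\beta_g(\beta_h(a_l\# v_k))=\beta_g(a_l\# v_{kh^{-1}})=a_l\# v_{kh^{-1}g^{-1}}=a_l\# v_{k(gh)^{-1}}=\beta_{gh}(a_l\# v_k)$, using Lemma \ref{lema21}(v); one must also note the domains match, which again reduces to $E_{h^{-1}}=E_{(gh)^{-1}}$ via Lemma \ref{lema21}(vi) as in the skew-ring discussion. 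Finally, for $B=\bigoplus_{e\in G_0}E_e$ I would observe that $E_e=\bigoplus_{l\in G,\ k\in G:\ d(k)=e}A_l\# v_k$, and since each $v_k$ has a unique domain identity $d(k)\in G_0$, the subspaces $\{A_l\# v_k\}$ are partitioned among the $E_e$'s with $e=d(k)$; summing over all $l,k$ recovers all of $B$, and the sum is direct because distinct $e$ pick out disjoint sets of basis pieces $A_l\# v_k$. The main obstacle I anticipate is purely bookkeeping: keeping the various index conditions ($d(k)=r(g)$, $d(h)=d(g)$, existence of products $kg^{-1}$) straight and confirming at each step that domains and codomains of the maps $\beta_g$ genuinely coincide; the identities collected in Lemma \ref{lema21} (especially (iv),(v),(vi),(vii)) are exactly what is needed, so no genuinely new idea should be required.
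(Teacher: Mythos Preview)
Your proposal is correct and follows essentially the same direct-verification approach as the paper: check $E_g=E_{r(g)}$, check $E_g$ is a two-sided ideal by tracking the $v$-leg in products, verify $\beta_g$ is a ring isomorphism with inverse $\beta_{g^{-1}}$, confirm axioms (i)--(ii) using Lemma~\ref{lema21}(v)--(vii), and read off the direct-sum decomposition from the partition of the basis pieces $A_l\# v_k$ by $d(k)$. The only notable difference is cosmetic---you frame multiplicativity as invariance of the smash-product formula under the right translation $v_k\mapsto v_{kg^{-1}}$, while the paper carries out the computation of $\beta_g(xy)$ and $\beta_g(x)\beta_g(y)$ explicitly; also, your appeal to Remark~\ref{obs24} and the set $J_0$ is not actually needed here.
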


\begin{proof}\, \, First, it is clear that $E_g=E_{r(g)}$, for all $g\in G$.
Now, taking $x=a_l\# v_k\in E_g$ and $y=b_s\# v_t\in B$, we have by definition
\[
xy=(a_l\# v_k)(b_s\#v_t)=
\begin{cases}
a_l(v_{kt^{-1}}\cdot b_s)\#v_t &\text{if $d(k)=d(t)$}\\
0  &\text{otherwise},
\end{cases}
\]
and $v_{kt^{-1}}\cdot b_s\neq 0$ if and only if $kt^{-1}=s$ or, equivalently, $t=s^{-1}k$.
Thus,
\[
xy=
\begin{cases}
a_lb_s\#v_t &\text{if $t=s^{-1}k$}\\
0  &\text{otherwise},
\end{cases}
\]
Since $d(t)=d(s^{-1}k)=d(k)=r(g)$, it follows that $xy\in E_g$. Similarly, we also have $yx\in E_g$.
Hence, $E_g$ is an ideal of $B$.

\vu

It is immediate to check that $\beta_g:E_{g^{-1}}\to E_g$ is a well defined additive map,
$\beta_{g^{-1}}=\beta_g^{-1}$ for all $g\in G$, and $\beta_e=I_{E_e}$ for all $e\in G_0$.
From the above we also have
\[
\beta_g(xy)=
\begin{cases}
a_lb_s\#v_{tg^{-1}} &\text{if $t=s^{-1}k$}\\
0  &\text{otherwise},
\end{cases}
\]
for all $x=a_l\# v_k$ and $y=b_s\# v_t$ in $E_{g^{-1}}$.
On the other hand, since $d(kg^{-1})=d(g^{-1})=d(tg^{-1})$ (see Lemma \ref{lema21}(vi)), and
$d(k)=r(g^{-1})=d(g)$ we have
\[
\begin{array}{ccl}
\beta_g(x)\beta_g(y)&=&a_l(v_{kg^{-1}(tg^{-1})^{-1}}\cdot b_s)\#v_{tg^{-1}}\\
&=&a_l(v_{kg^{-1}gt^{-1}}\cdot b_s)\#v_{tg^{-1}}\\
&=&a_l(v_{(k(d(g))t^{-1}}\cdot b_s)\#v_{tg^{-1}}\\
&=&a_l(v_{kt^{-1}}\cdot b_s)\#v_{tg^{-1}}\\
&=&a_lb_s\#v_{tg^{-1}}
\end{array}
\]
if $t=s^{-1}k$ and $0$ otherwise. So, $\beta$ is multiplicative.

\vu

Finally, notice that, for all $(g,h)\in G^2$, $\text{dom}(\beta_g\beta_h)=
\beta_{h^{-1}}(E_h\bigcap E_{g^{-1}})= \beta_{h^{-1}}(E_h)=E_{h^{-1}}=
E_{d(h)}=E_{d(gh)}=E_{(gh)^{-1}}=\text{dom}(\beta_{gh})$, and
\[
\begin{array}{ccl}
\beta_g\beta_h(x)&=&\beta_g(\beta_h(a_l\#v_k))=\beta_g(a_l\#v_{kh^{-1}})\\
&=&a_l\#v_{k(gh)^{-1}}=\beta_{gh}(a_l\#v_k)=\beta_{gh}(x),\
\end{array}
\]
for all $x=a_l\#v_k\in E_{h^{-1}}$. Hence, $\beta$ is an action of $G$ on $B$.

\vu

The last assertion is immediate, since $B=\sum_{g\in G}E_{r(g)}$ and
$E_e\bigcap\sum\limits_{{e'\in G_0}\atop{e'\neq e}}E_{e'}=0$.
\end{proof}

\vd

The skew groupoid ring $B\star_\beta G$ is clearly associative (see the
previous subsection). However, it is not unital, because any element of the type
$x=(a_j\# v_k)\delta_s$, with $d(k)=r(s)$ and $r(k)\neq d(j)$, is a left annhilator
of $B\star_\beta G$. Indeed, it enough to verify that $xE_g=0$, for all $g\in G$.
Let $y=a_l\#v_h\in E_g$, that is, $d(h)=r(g)$. Clearly
$xy=0$ if $d(s)\neq r(g)$, and, otherwise, we have
\[
\begin{array}{ccl}
xy&=&(a_j\#v_k)\beta_s(a_l\#v_h)\delta_{sg}\\
&=&(a_j\#v_k)(a_l\#v_{hs^{-1}})\delta_{sg}\\
&=&(a_j(v_{ksh^{-1}}\cdot a_l)\#v_{hs^{-1}})\delta_{sg}\\
\end{array}
\]
But, $v_{ksh^{-1}}\cdot a_l\neq 0$ if and only if $ksh^{-1}=l$, which implies
$r(l)=r(k)$ and $a_j(v_{ksh^{-1}}\cdot a_l)=a_ja_l=0$ because $d(j)\neq r(l)$.

\vd

\section{Duality for groupoid actions}

In this section $R$ will denote a not necessarily unital $K$-algebra, $G$ a
finite groupoid and $\beta=(\{E_g\}_{g\in G}, \{\beta_g\}_{g\in G})$
an action of $G$ on $R$ such that $E_e$ is unital, for all $e\in G_0$.

\vu

We are concerned with a Cohen-Montgomery-type duality theorem \cite{CM} for
groupoid actions. More precisely, we show that the weak smash product
$(R\star_\beta G)\# KG^*$ (which is not unital) contains a unital subalgebra
isomorphic to a finite direct sum  of matrix $K$-algebras. In particular, if
$G$ is a group we recover Theorem 3.2 of \cite{CM}.

\vu

Let $B=(R\star_\beta G)\# KG^*$. The subalgebra
that we are looking for is $$B_0=\bigoplus\limits_{d(g)=r(g)=r(h)}E_g\delta_g\#v_h,$$
and, in order to get our main result, the strategy is to obtain a decomposition
of $B_0$ into a direct sum of suitable unital subalgebras satisfying the
conditions of the following lemma due to D. S. Passman
(see \cite[Lemma 1.6, p. 228]{Pass}).

\vu

\begin{lema}\label{lema31}
Let $S$ be a unital ring, and $1_S=e_1+\cdots +e_n$ be a
decomposition of $1_S$ into a sum of orthogonal idempotents.
Let $U$ be a subgroup of the group of the units of $S$,
and assume that $U$ permutes the set $\{e_1,\ldots, e_n\}$
transitively by conjugation.
Then $S\simeq M_n(e_1Se_1)$.
\end{lema}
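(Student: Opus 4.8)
The plan is to prove Passman's Lemma \ref{lema31} directly by constructing an explicit isomorphism $S \to M_n(e_1Se_1)$, using the transitive $U$-action on the idempotents to produce a compatible system of ``matrix units.'' First I would fix, for each $i \in \{1,\dots,n\}$, an element $u_i \in U$ with $u_i e_1 u_i^{-1} = e_i$ (taking $u_1 = 1_S$); transitivity of the conjugation action guarantees these exist. Then I would set $e_{ij} := u_i e_1 u_j^{-1} \in S$ and check the matrix-unit relations: $e_{ij}e_{kl} = \delta_{jk} e_{il}$ and $\sum_i e_{ii} = \sum_i e_i = 1_S$. The relation $e_{ii} = u_i e_1 u_i^{-1} = e_i$ is immediate, and $e_{ij}e_{jl} = u_i e_1 u_j^{-1} u_j e_1 u_l^{-1} = u_i e_1 u_l^{-1} = e_{il}$; for $j \neq k$ one gets $e_1 u_j^{-1} u_k e_1$ sandwiched in the middle, and since $u_j^{-1}u_k$ conjugates $e_1$ to $u_j^{-1} e_k u_j$, which is orthogonal to $e_1$ when $j \neq k$ (here I would spell out that $e_1 (u_j^{-1} u_k) e_1 = e_1 (u_j^{-1} u_k e_1 u_k^{-1} u_j) (u_j^{-1}u_k) = e_1 (u_j^{-1} e_k u_j)(u_j^{-1}u_k)$ and use $e_1 \cdot u_j^{-1} e_k u_j = u_j^{-1}(u_j e_1 u_j^{-1}) e_k u_j = u_j^{-1} e_j e_k u_j = 0$).

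Having the system $\{e_{ij}\}$, I would invoke the standard Peirce-decomposition argument: for any unital ring $S$ with a full system of matrix units $\{e_{ij}\}_{1\le i,j\le n}$ (meaning $e_{ij}e_{kl} = \delta_{jk}e_{il}$ and $\sum_i e_{ii} = 1_S$), one has $S \simeq M_n(e_{11}Se_{11})$ via the map sending $s \mapsto \bigl(e_{1i}\, s\, e_{j1}\bigr)_{i,j}$, with inverse $(s_{ij}) \mapsto \sum_{i,j} e_{i1} s_{ij} e_{1j}$. I would verify this is a ring homomorphism (multiplicativity reduces to $e_{j1}e_{1k} = \delta_{jk}e_{11}$) and a bijection (the two maps are mutually inverse using $\sum_i e_{ii} = 1_S$ to telescope). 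Since $e_{11} = e_1$, this yields $S \simeq M_n(e_1Se_1)$, as claimed.

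The only genuinely delicate point is the orthogonality computation showing $e_{ij}e_{kl} = 0$ when $j \neq k$; everything else is bookkeeping. The subtlety is that we only know $U$ \emph{permutes} $\{e_1,\dots,e_n\}$, so one must argue that for $j \neq k$ the conjugate $u_j^{-1} e_k u_j$ equals $e_m$ for some $m \neq 1$ and hence kills $e_1$ on both sides — this is where the hypothesis that the $e_i$ are \emph{orthogonal} idempotents (not merely idempotents) is used, together with the fact that conjugation by a unit permutes the set, so it cannot send $e_k$ ($k\neq j$, after conjugating appropriately) onto $e_1$. I would state this as a short lemma-within-the-proof and then let the matrix-unit machinery run. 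No additional external input is needed beyond elementary ring theory.
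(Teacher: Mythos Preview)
Your argument is correct. The paper, however, does not actually prove Lemma~\ref{lema31}: it merely states the result and attributes it to Passman \cite[Lemma 1.6, p.~228]{Pass}, using it as a black box in the later propositions. So there is no ``paper's own proof'' to compare against; you have supplied the standard matrix-unit proof that one finds in Passman's book, and it goes through exactly as you outline.

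One minor remark on presentation: in the orthogonality step for $j\neq k$, your second computation $e_1\cdot u_j^{-1}e_ku_j = u_j^{-1}(u_je_1u_j^{-1})e_ku_j = u_j^{-1}e_je_ku_j = 0$ is clean and already suffices; the earlier rewriting via $u_j^{-1}u_k e_1 u_k^{-1}u_j$ is unnecessary and slightly obscures the point. You might also note explicitly that the $e_i$ are pairwise distinct (nonzero orthogonal idempotents must be, and transitivity forces either all or none to vanish), since this is what guarantees that $u_j e_1 u_j^{-1} = e_j$ determines $j$ uniquely and hence that $u_j^{-1}e_ku_j \neq e_1$ when $k\neq j$. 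With that said, the proof is complete and self-contained.
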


The next lemmas are the necessary preparation
to get our purpose. For each $e\in G_0$, put
$S_e=\{g\in G\ | \ \ d(g)=e\}$ and
$T_e=\{g\in G\ |\ r(g)=e\}$.
Notice that $G_e=S_e\bigcap T_e$, for all $e\in G_0$.

\vd

\begin{lema}\label{lema32}
The following statements hold:
\begin{itemize}

\item[(i)] $B_0$ is a unital subalgebra of $B$,
with identity element $w=\sum\limits_{{e\in G_0}
\atop{g\in T_e}}1_e\delta_e\# v_g$.

\vu

\item[(ii)] $W=\{w_e:=1_e\delta_e\#\sum\limits_{g\in T_e}
v_g\ |\ e\in G_0\}$ is a set of central orthogonal idempotents of $B_0$.

\vu

\item[(iii)] $B_0=\bigoplus_{e\in G_0}B_e$,
where $B_e=B_0w_e=\sum\limits_{{g\in G_e}\atop {h\in T_e}}E_g\delta_g\#v_h$
is a unital ideal (so, a subalgebra) of $B_0$ with identity element $w_e$.

\vu

\item[(iv)] For each $e\in G_0$, $W_e=\{w_{e,g}:=1_e\delta_e\#
v_g\ |\ g\in T_e\}$ is a set of noncentral
orthogonal idempotents of $B_e$ whose sum is $w_e$.
\end{itemize}
\end{lema}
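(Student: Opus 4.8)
The plan is to verify the four assertions in order, since each one builds on the previous and all of them reduce to direct computations with the explicit multiplication rules for the skew groupoid ring and the weak smash product. Throughout I will use the product formula in $R\star_\beta G$ together with the weak smash product rule
$(a\#v_g)(b\#v_h)=a(v_{gh^{-1}}\cdot b)\#v_h$ if $d(h)=d(g)$, and $0$ otherwise, and keep in mind that, for $a=x\delta_s\in R\star_\beta G$, the component $v_j\cdot a$ of $a$ in degree $j$ is $x\delta_s$ if $j=s$ and $0$ otherwise.

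First I would treat (i). To see $B_0$ is a subalgebra, take two basis elements $E_g\delta_g\#v_h$ and $E_{g'}\delta_{g'}\#v_{h'}$ with $d(g)=r(g)=r(h)$ and $d(g')=r(g')=r(h')$. Their product is nonzero only if $d(h')=d(h)$ and only if $hh'^{-1}=g'$ as a label in the skew groupoid ring (the middle factor $v_{hh'^{-1}}\cdot(x'\delta_{g'})$), forcing $g'=hh'^{-1}$; but $d(g')=r(g')$ and $r(h)=r(g)$ together with $d(g')=d(h'^{-1})=r(h')$ give the required consistency, and the resulting label on the $\delta$ is $gg'=g(hh'^{-1})$, which lies in $G_{r(g)}$, while the $v$-label is $h'$ with $r(h')=d(g')=r(g')$; so the product lands back in $B_0$. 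For the identity element, I would compute $w\cdot(E_g\delta_g\#v_h)$ and $(E_g\delta_g\#v_h)\cdot w$ for a basis element with $d(g)=r(g)=r(h)=:e$: only the summand $1_e\delta_e\#v_{g'}$ of $w$ with the matching object survives, and the weak smash rule collapses it to exactly $E_g\delta_g\#v_h$ using $1_e\delta_e\beta_e(\cdot)=\cdot$ and $v_{g' g'^{-1}\cdots}=v$ bookkeeping. The main obstacle here is simply being careful about which index is being summed over in $w$ and matching domains/ranges correctly.

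For (ii), (iii) and (iv) the computations are of the same flavour. For (ii) I would compute $w_e w_f$ directly: the product vanishes unless the objects match, and when $e=f$ the sum over $g,g'\in T_e$ of $(1_e\delta_e\#v_g)(1_e\delta_e\#v_{g'})$ collapses by the weak smash rule (nonzero only when $g=g'$) to $w_e$ again, giving $w_e^2=w_e$ and $w_ew_f=0$ for $e\neq f$; centrality follows by checking $w_e x=x w_e$ on a basis element $x=E_g\delta_g\#v_h$ of $B_0$, where both sides are $x$ if $r(g)=e$ and $0$ otherwise, because the only index in $T_e$ that can survive in either product is forced. Then (iii): $\sum_{e\in G_0}w_e=w=1_{B_0}$ by (i) together with the orthogonality from (ii), so $B_0=\bigoplus_e B_0w_e$; identifying $B_0w_e$ with $\bigoplus_{g\in G_e,\,h\in T_e}E_g\delta_g\#v_h$ is again a direct index check (a basis element $E_g\delta_g\#v_h$ survives right multiplication by $w_e$ exactly when $r(g)=e$, i.e.\ $g\in G_e$ and $h\in T_e$), and $B_e$ is an ideal of $B_0$ because $w_e$ is central and idempotent. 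Finally (iv): $w_{e,g}w_{e,g'}=(1_e\delta_e\#v_g)(1_e\delta_e\#v_{g'})$ equals $1_e\delta_e\#v_{g'}$ if $g=g'$ (using $d(g)=d(g')=e$, and $v_{gg'^{-1}}\cdot(1_e\delta_e)=v_e\cdot(1_e\delta_e)=1_e\delta_e$ when $g=g'$) and $0$ otherwise; summing over $g\in T_e$ recovers $w_e$; and noncentrality is exhibited by a single explicit basis element $E_g\delta_g\#v_h$ with $g\in G_e$, $h,gh$ distinct in $T_e$, for which $w_{e,h}$ and $w_{e,gh}$ act differently on the left and right. In all four parts the only real work is careful bookkeeping of the groupoid relations $d,r$ and of which summand of a sum survives a given product; there is no conceptual obstacle beyond that, so I would present (i) in reasonable detail and then indicate that (ii)--(iv) follow by the same kind of direct verification.
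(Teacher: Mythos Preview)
Your proposal is correct and follows essentially the same approach as the paper: each part is handled by a direct computation with the explicit multiplication rules for the skew groupoid ring and the weak smash product, tracking domain/range conditions on the groupoid indices. One small slip: in your treatment of (iv) you write ``using $d(g)=d(g')=e$'', but for $g,g'\in T_e$ it is $r(g)=r(g')=e$; the point you actually need is that when $g=g'$ one has $gg'^{-1}=gg^{-1}=r(g)=e$, so $v_{gg'^{-1}}\cdot(1_e\delta_e)=1_e\delta_e$. This does not affect the argument.
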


\begin{proof}\, \, (i) It is clear that $B_0$ is a $K$-submodule of $B$.
Given $x=a_g\delta_g\# v_h$ and $y=b_l\delta_l\# v_k$ in $B_0$ we have
\[
xy=
\begin{cases}
(a_g\delta_g)(v_{hk^{-1}}\cdot b_l\delta_l)\# v_k &\text{if $d(k)=d(h)$}\\
0  &\text{otherwise},
\end{cases}
\]
and, consequently,
\[
xy=
\begin{cases}
(a_g\delta_g)( b_l\delta_l)\# v_k &\text{if $k=l^{-1}h$}\\
0  &\text{otherwise},
\end{cases}
\]
Since $d(g)=r(h)=d(l^{-1})=r(l)$, it follows that $$(a_g\delta_g)(b_l\delta_l)=
a_g\beta_g(b_l)\delta_{gl}\in E_{gl}\delta_{gl}.$$ So, $xy\in B_0$ because
$r(gl)=r(g)=r(h)=r(l)=r(k)$ and $d(gl)=d(l)=r(k)$.

\vu

Also,
\[
\begin{array}{ccl}
xw&=&\sum\limits_{e\in G_0}(a_g\delta_g\#v_h)(1_e\delta_e\#\sum\limits_{l\in T_e}v_l)\\
&=&\sum\limits_{e\in G_0}\sum\limits_{{l\in T_e}\atop{d(l)=d(h)}}a_g
\delta_g(v_{hl^{-1}}\cdot1_e\delta_e)\# v_l.\\
&=&(a_g\delta_g)(1_{r(h)}\delta_{r(h)})\#v_h\\
&=&(a_g\delta_g)(1_{d(g)}\delta_{d(g)})\#v_h\\
&=&a_g\beta_g(1_{d(g)})\delta_{gd(g)}\#v_h\\
&=&a_g\delta_g\#v_h\\
&=&x,\
\end{array}
\]

\vu

\nd and
\[
\begin{array}{ccl}
wx&=&\sum\limits_{e\in G_0}\sum\limits_{l\in T_e}(1_e\delta_e\#v_l)(a_g\delta_g\#v_h)\\
&=&\sum\limits_{e\in G_0}\sum\limits_{{l\in T_e}\atop{d(l)=d(h)}}
1_e\delta_e(v_{lh^{-1}}\cdot a_g\delta_g)\#v_h\\
&=&(1_{r(g)}\delta_{r(g)})(a_g\delta_g)\#v_h\\
&=&\beta_{r(g)}(a_g)\delta_{r(g)g}\#v_h\\
&=&a_g\delta_g\#v_h\\
&=&x.\
\end{array}
\]

\vt

(ii)\, Let $e,f\in G_0$, $w_e=1_e\delta_e\#\sum\limits_{g\in T_e}v_g$ and
$w_f=1_f\delta_f\#\sum\limits_{h\in T_f}v_h$. Then,
$$w_ew_f=\sum\limits_{g\in
T_e}\sum\limits_{d(k)=d(g)}1_e\delta_e(v_{gk^{-1}}\cdot1_f\delta_f)\#v_k(\sum\limits_{h\in
T_f}v_h),$$ and so
\[
w_ew_f=
\begin{cases}
\sum\limits_{g\in T_e}\sum\limits_{{h\in T_f}\atop {d(h)=d(g)}}
1_e\delta_e(v_{gh^{-1}}\cdot1_f\delta_f)\#v_h &\\
0 &\text{ if}\,\, \, d(h)\neq d(g),\,\, \forall g\in T_e, h\in T_f.
\end{cases}
\]

\vd

\nd Noticing that $v_{gh^{-1}}.1_f\delta_f\neq 0$ if and only if $h=g$ and $e=r(g)=f$,
we have $w_ew_e=\sum\limits_{g\in
T_e}(1_e\delta_e)(1_e\delta_e)\#v_g=1_e\delta_e\#\sum\limits_{g\in T_e}v_g=w_e.$
Therefore,
\[
w_ew_f=
\begin{cases}
w_e &\text{if $e=f$}\\
0 &\text{ otherwise}.
\end{cases}
\]

\vt

It remains to show that $w_e$ is central in $B_0$. Take $x=a_g\delta_g\#v_h\in B_0$. Then,
$$xw_e=(a_g\delta_g\#v_h)(1_e\delta_e\# \sum\limits_{l\in T_e}v_l)=
\sum\limits_{d(t)=d(h)}a_g\delta_g(v_{ht^{-1}}\cdot1_e\delta_e)\#
v_t(\sum\limits_{l\in T_e}v_l).$$ Observe that if $r(t)\neq e$, for all $t\in G$
such that $d(t)=d(h)$, then $v_t(\sum\limits_{l\in T_e}v_l)=0$. Hence,

\[
xw_e=
\begin{cases}
\sum\limits_{{l\in T_e}\atop {d(l)=d(h)}}a_g\delta_g(v_{hl^{-1}}.1_e\delta_e)\# v_l &\\
0 & \mbox{if}\,\, \,d(h)\neq d(l), \,\,\, \forall l\in T_e.
\end{cases}
\]

\vd

\nd Since $v_{hl^{-1}}\cdot1_e\delta_e\neq 0$ if and only if $l=h$, and $e=r(h)=d(g)$, it follows that
$$\sum\limits_{{l\in T_e}\atop {d(l)=d(h)}}a_g\delta_g(v_{hl^{-1}}\cdot1_e\delta_e)\# v_l=
(a_g\delta_g)(1_e\delta_e)\#v_h=a_g\beta_g(1_e)\delta_{ge}\#v_h=a_g\delta_g\#v_h=x.$$  Hence,
\[
xw_e=
\begin{cases}
x &\text{if $h\in T_e$}\\
0 &\text{ otherwise}.
\end{cases}
\]

\vu

\nd By a similar calculation one obtains $w_ex=xw_e$.

\vt

(iii) It easily follows from (ii).

\vd

(iv)\,\, Take $w_{e,g}=1_e\delta_e\#v_g$ and $w_{e,h}=1_e\delta_e\#v_h$ in $W_e$.
Then,
\[
w_{e,g}w_{e,h}=
\begin{cases}
1_e\delta_e(v_{gh^{-1}}\cdot1_e\delta_e)\#v_h
 &\text{if $d(h)=d(g)$}\\
0 &\text{ otherwise}.
\end{cases}
\]
Since $v_{gh^{-1}}\cdot1_e\delta_e\neq 0$ if and only if $h=g$, and $r(g)=e$ by assumption, it follows that
\[
w_{e,g}w_{e,h}=
\begin{cases}
w_{e,g} &\text{if $h=g$}\\
0 &\text{ otherwise}.
\end{cases}
\]

\vu

To see that each $w_{e,l}=1_e\delta_e\#v_l$ is not central in $B_e$,
take $x=a_g\delta_g\#v_h\in B_e$. Then, $r(g)=d(g)=r(h)=e$ and
\[
xw_{e,l}=
(a_g\delta_g\#v_h)(1_e\delta_e\#v_l)=
\begin{cases}
a_g\delta_g(v_{hl^{-1}}\cdot1_e\delta_e)\#v_l &\mbox{ if } d(l)=d(h)\\
0 &\mbox{ otherwise},
\end{cases}
\]
that easily implies
\[
xw_{e,l}=
\begin{cases}
x &\text{if $l=h$}\\
0 &\text{ otherwise}.
\end{cases}
\]

\vu

On the other hand,
\[
w_{e,l}x=(1_e\delta_e\#v_l)(a_g\delta_g\#v_h)=
\begin{cases}
1_e\delta_e(v_{lh^{-1}}\cdot a_g\delta_g)\#v_h &\mbox{ if } d(l)=d(h)\\
0 &\mbox{ otherwise}
\end{cases}
\]
and, consequently,
\[
w_{e,l}x=
\begin{cases}
x &\mbox{ if } l=gh\\
0& \mbox{ otherwise}.
\end{cases}
\]

\vu

The proof is complete.
\end{proof}

\vu

\begin{lema}\label{lema33}
For each $e\in G_0$,
\begin{itemize}

\item[(i)] $U_e=\{u_{g,e}:=1_g\delta_g\#\sum\limits_{h\in T_e}
v_h\ |\ g\in G_e\}$ is a subgroup
of the group of the units of $B_e$,

\vu

\item[(ii)] $U_e$ acts on $W_e$ by conjugation,

\vu

\item[(iii)] $w_{e,h}^{U_e}=\{w_{e,gh}\ |\ g\in G_e\}=
\{w_{e,l}\ |\ l\in S_{d(h)}\}$ is the orbit of
the element $w_{e,h}\in W_e$, under the action of $U_e$ on $W_e$.
\end{itemize}
\end{lema}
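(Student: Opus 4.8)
The plan is to prove (i)--(iii) by direct computation, using repeatedly the two multiplication rules at hand---that of the skew groupoid ring $R\star_\beta G$ and that of the weak smash product $(R\star_\beta G)\#KG^*$---together with three elementary observations. First, for $g\in G_e$ one has $E_{g^{-1}}=E_{d(g)}=E_e=E_{r(g)}=E_g$, so the identity element $1_g$ of $E_g$ equals $1_e$; secondly, $\beta_g$ then restricts to a ring automorphism of $E_e$, so $\beta_g(1_e)=1_e$; thirdly, since the $KG^*$-module algebra structure on $R\star_\beta G$ comes from its $G$-grading $R\star_\beta G=\bigoplus_{s\in G}E_s\delta_s$, we have $v_r\cdot(x\delta_s)=x\delta_s$ if $r=s$ and $v_r\cdot(x\delta_s)=0$ otherwise.

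For (i), first note that taking $g=e$ gives $u_{e,e}=1_e\delta_e\#\sum_{h\in T_e}v_h=w_e$, the identity element of $B_e$ by Lemma~\ref{lema32}(iii). For $g,g'\in G_e$ I would expand $u_{g,e}u_{g',e}=\sum_{k,h\in T_e}(1_g\delta_g\#v_k)(1_{g'}\delta_{g'}\#v_h)$; by the smash product rule and the third observation above, the $(k,h)$-term is $0$ unless $d(h)=d(k)$ and $kh^{-1}=g'$, that is, unless $k=g'h$ (which automatically lies in $T_e$ and has $d(k)=d(h)$), and then $(1_g\delta_g)(1_{g'}\delta_{g'})=1_g\beta_g(1_{g'})\delta_{gg'}=1_{gg'}\delta_{gg'}$ by the first two observations; summing over $h\in T_e$ gives $u_{g,e}u_{g',e}=u_{gg',e}$. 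Thus $g\mapsto u_{g,e}$ is a homomorphism from the group $G_e$ into $B_e$ with image $U_e$ (it is injective, since the $1_e\delta_g$ with $g\in G_e$ are $K$-linearly independent, so $U_e\cong G_e$); since $u_{g,e}u_{g^{-1},e}=u_{g^{-1},e}u_{g,e}=u_{e,e}=w_e=1_{B_e}$, each $u_{g,e}$ is a unit of $B_e$ with inverse $u_{g^{-1},e}\in U_e$, and $U_e$ is therefore a subgroup of the group of units of $B_e$.

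For (ii), given $g\in G_e$ and $h\in T_e$, I would compute $u_{g,e}\,w_{e,h}\,u_{g,e}^{-1}=u_{g,e}\,w_{e,h}\,u_{g^{-1},e}$ in two steps. In $u_{g,e}w_{e,h}=\sum_{k\in T_e}(1_g\delta_g\#v_k)(1_e\delta_e\#v_h)$ only the term $k=h$ survives, and $(1_g\delta_g)(1_e\delta_e)=1_g\beta_g(1_e)\delta_{ge}=1_g\delta_g$, so $u_{g,e}w_{e,h}=1_g\delta_g\#v_h$. Multiplying this on the right by $u_{g^{-1},e}=1_{g^{-1}}\delta_{g^{-1}}\#\sum_{k\in T_e}v_k$, only the term $k=gh$ survives---and $r(gh)=r(g)=e$, so $gh\in T_e$---while $(1_g\delta_g)(1_{g^{-1}}\delta_{g^{-1}})=1_g\beta_g(1_{g^{-1}})\delta_{gg^{-1}}=1_e\delta_e$; hence $u_{g,e}\,w_{e,h}\,u_{g,e}^{-1}=1_e\delta_e\#v_{gh}=w_{e,gh}\in W_e$. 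Together with (i), this shows that $(g,w_{e,h})\mapsto u_{g,e}\,w_{e,h}\,u_{g,e}^{-1}$ is an action of $U_e$ on $W_e$ by conjugation.

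Finally, for (iii), the orbit of $w_{e,h}$ equals $\{u_{g,e}\,w_{e,h}\,u_{g,e}^{-1}:g\in G_e\}=\{w_{e,gh}:g\in G_e\}$ by the computation in (ii), so it remains to identify the index set $\{gh:g\in G_e\}$ with $\{l\in S_{d(h)}:r(l)=e\}=S_{d(h)}\cap T_e$ (which is what the right-hand side of the statement means, since $w_{e,l}$ lies in $W_e$ only when $l\in T_e$). The inclusion $\subseteq$ is immediate from $d(gh)=d(h)$ and $r(gh)=r(g)=e$; conversely, given $l$ with $d(l)=d(h)$ and $r(l)=e$, the product $g:=lh^{-1}$ is defined (because $d(l)=d(h)=r(h^{-1})$), lies in $G_e$, and satisfies $gh=l(h^{-1}h)=l\,d(h)=l$. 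Since right multiplication by $h$ is injective on $G_e$, the map $g\mapsto gh$ is a bijection onto $S_{d(h)}\cap T_e$, so in particular this orbit has $|G_e|$ elements. I do not expect any genuine obstacle here; the only thing requiring care is the bookkeeping inside the iterated construction $B=(R\star_\beta G)\#KG^*$---keeping the two distinct multiplication rules and their domain conditions ($d(\cdot)=d(\cdot)$ for the smash factor and $(\cdot,\cdot)\in G^2$ for the skew factor) apart, and discarding the vanishing terms systematically via the third observation above and Lemma~\ref{lema21}.
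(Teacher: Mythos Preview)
Your proof is correct and follows essentially the same route as the paper's: both argue (i)--(iii) by direct computation with the smash-product and skew-groupoid-ring rules, obtaining $u_{g,e}u_{g',e}=u_{gg',e}$ and $u_{g,e}w_{e,h}u_{g^{-1},e}=w_{e,gh}$ just as the paper does. Your presentation is in fact a bit tidier in two places---you phrase (i) as a group homomorphism $G_e\to B_e^\times$, $g\mapsto u_{g,e}$, which immediately gives closure and inverses, and in (iii) you make explicit that the index set $\{l\in S_{d(h)}\}$ on the right of the statement should be read as $S_{d(h)}\cap T_e$ (since $w_{e,l}\in W_e$ requires $l\in T_e$), which is exactly how the paper uses it afterwards in Lemma~\ref{lema34}.
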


\begin{proof}
(i) Let $u_{g,e}=1_g\delta_g\#\sum\limits_{h\in T_e}v_h$
and $u_{l,e}=1_l\delta_l\#\sum\limits_{k\in T_e}v_k$ be elements of $U_e$. Then,


\begin{align*}
u_{g,e}u_{l,e}&=\sum\limits_{h\in
T_e}(1_g\delta_g\#v_h)(1_l\delta_l\#\sum\limits_{k\in T_e}v_k)\\
&=\sum\limits_{h\in T_e}\sum\limits_{{k\in T_e}\atop
{d(k)=d(h)}}1_g\delta_g(v_{hk^{-1}}\cdot1_l\delta_l)\#v_k\\
&=\sum\limits_{h\in
T_e}1_g\delta_g1_l\delta_l\#v_{l^{-1}h}\\
&=1_{gl}\delta_{gl}\#\sum\limits_{h\in T_e}v_{l^{-1}h}
\end{align*}
The last equality follows from the fact that $1_g=1_{r(g)}=1_e=1_{r(l)}=1_l$ and $1_{gl}=1_{r(gl)}=1_{r(g)}=1_g$.
Since $r(l^{-1}h)=r(l^{-1})=d(l)=e$ and $gl\in G_e$, it follows that $u_{g,e}u_{l,e}\in U_e$.

\vu

Finally, taking
$u_{g^{-1},e}=1_{g^{-1}}\delta_{g^{-1}}\#\sum\limits_{l\in T_e}v_l\in U_e$ we have


\begin{align*}
u_{g,e}u_{g^{-1},e}&=(1_g\delta_g\#\sum\limits_{h\in
T_e}v_h)(1_{g^{-1}}\delta_{g^{-1}}\#\sum\limits_{l\in T_e}v_l)\\
&=\sum\limits_{h\in T_e}\sum\limits_{{l\in T_e\atop d(l)=
d(h)}}1_g\delta_g(v_{hl^{-1}}\cdot1_{g^{-1}}\delta_{g^{-1}})\#v_l\\
&=\sum\limits_{h\in T_e}1_g\delta_g1_{g^{-1}}\delta_{g^{-1}}\#v_{h}
\quad (\text{since}\quad 1_{r(h)}=1_e=1_{d(g)}=1_{g^{-1}})\\
&=\sum\limits_{h\in T_e}1_{r(g)}\delta_{r(g)}\#v_{h}\\
&=1_e\delta_e\#\sum\limits_{h\in T_e}v_{h}\\
&=1_{B_e}.
\end{align*}
By a similar calculation one also gets $u_{g^{-1},e}u_{g,e}=1_{B_e}$.

\vt

(ii) Taking $w_{e,l}=1_e\delta_e\#v_l\in W_e$ and $u_{g,e}=1_g\delta_g\#\sum\limits_{h\in T_e}v_h\in U_e$
we have


\[
\begin{array}{ccl}
u_{g,e}w_{e,l}&=&(1_g\delta_g\#\sum\limits_{h\in T_e}v_h)(1_e\delta_e\#v_l)\\
&=&\sum\limits_{h\in
T_e}\sum\limits_{d(t)=d(h)}1_g\delta_g(v_{ht^{-1}}\cdot1_e\delta_e)\#v_tv_l\\
&=& \sum\limits_{{h\in T_e}\atop{d(h)=d(l)}}1_g\delta_g(v_{hl^{-1}}.1_e\delta_e)\#v_l\\
&=&1_g\delta_g1_e\delta_e\#v_l\\
&=&1_g\delta_g\#v_l.\
\end{array}
\]
Hence,
\begin{align*}
u_{g,e}w_{e,l}u_{g^{-1},e}&=(1_g\delta_g\#v_l)(1_{g^{-1}}\delta_{g^{-1}}\#\sum\limits_{k\in
T_e}v_k)\\
&=\sum\limits_{d(t)=d(l)}1_g\delta_g(v_{lt^{-1}}\cdot1_{g^{-1}}\delta_{g^{-1}})\#v_t(\sum\limits_{k\in
T_e}v_k)\\
&=\sum\limits_{{k\in T_e}\atop
{d(k)=d(l)}}1_g\delta_g(v_{lk^{-1}}\cdot1_{g^{-1}}\delta_{g^{-1}})\#v_k\\
&=1_g\delta_g1_{g^{-1}}\delta_{g^{-1}}\#v_{gl}\\
&=1_{r(g)}\delta_{r(g)}\#v_{gl}\\
&=1_e\delta_e\#v_{gl},
\end{align*}
which belongs to $W_e$ because $r(gl)=r(g)=e$.

\vd

(iii) It easily follows from the proof of (ii).
\end{proof}

\vu

It follows from the above that any two elements of $W_e$, say $w_{e,g}$ and $w_{e,h}$,
are in the same orbit by the action of $U_e$ if and only if $d(g)=d(h)$. Hence, the action
of $U_e$ on $W_e$ in general is not transitive. Nevertheless, $W_e$ contains subsets
$\Omega_{e,h_1},\ldots, \Omega_{e,h_{n_e}}$ and $U_e$ contains subgroups $U_{e,h_1},\ldots, U_{e,h_{n_e}}$
such that each $U_{e,h_i}$ acts transitively on $\Omega_{e,h_i}$ by conjugation, as we shall see
in the sequel.

\vu

\begin{lema}\label{lema34} For each $e\in G_0$, let $w_{e,h_1}^{U_e},\ldots
,w_{e,h_{n_e}}^{U_e}$ be the distinct orbits of $U_e$ in $W_e$, and
$\omega_{e,h_i}$ denote the sum of all elements of the orbit
$w_{e,h_i}^{U_e}$, for each $1\leq i\leq n_e$. Then,

\begin{itemize}
\item[(i)] $d(h_i)\neq d(h_j)$, for all $i\neq j$,

\vu

\item[(ii)] $\omega_{e,h_i}=1_e\delta_e\#\sum\limits_{l\in T_e\cap
S_{d(h_i)}}v_l\,\,$ and $\,\, w_e=\sum\limits_{1\leq i\leq
n_e}\omega_{e,h_i}$,

\vu

\item[(iii)] the elements $\omega_{e,h_i}$ are central orthogonal idempotents of
$B_e$,

\vu

\item[(iv)] for each $1\leq i\leq n_e$,
$B_{e,h_i}:=B_e\omega_{e,h_i}=\bigoplus\limits_{{g\in G_e}\atop
{l\in T_e\bigcap S_{d(h_i)}}}E_g\delta_g\#v_l$ is a unital ideal (hence, a subalgebra) of $B_e$
with identity element $\omega_{e,h_i}$,

\vu

\item[(v)] $B_e=\bigoplus\limits_{1\leq i\leq n_e}B_{e,h_i}$.
\end{itemize}
\end{lema}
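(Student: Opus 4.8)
The plan is to verify the five assertions essentially by direct computation, leaning heavily on the structural results already established in Lemmas~\ref{lema32} and~\ref{lema33}, and on Lemma~\ref{lema31}. For part (i), recall from the discussion preceding this lemma that two idempotents $w_{e,g}$, $w_{e,h}$ of $W_e$ lie in the same $U_e$-orbit if and only if $d(g)=d(h)$; hence distinct orbits must have representatives with distinct domain identities, which is exactly (i). For part (ii), I would compute the orbit $w_{e,h_i}^{U_e}$ explicitly: by Lemma~\ref{lema33}(iii) it equals $\{w_{e,l}\mid l\in S_{d(h_i)}\}$, but every element of $W_e$ has the form $1_e\delta_e\#v_l$ with $l\in T_e$, so the orbit is $\{1_e\delta_e\#v_l\mid l\in T_e\cap S_{d(h_i)}\}$. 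Summing gives $\omega_{e,h_i}=1_e\delta_e\#\sum_{l\in T_e\cap S_{d(h_i)}}v_l$. Since $\{S_{d(h_i)}\}_{1\le i\le n_e}$ partitions $G$ (each $l\in G$ has a unique domain identity), the sets $T_e\cap S_{d(h_i)}$ partition $T_e$, and therefore $\sum_i\omega_{e,h_i}=1_e\delta_e\#\sum_{l\in T_e}v_l=w_e$ by Lemma~\ref{lema32}(ii).

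For part (iii), orthogonality of the $\omega_{e,h_i}$ follows from orthogonality of the $w_{e,l}$ (Lemma~\ref{lema32}(iv)) together with the fact, just noted, that the index sets $T_e\cap S_{d(h_i)}$ are pairwise disjoint: a cross term $w_{e,l}w_{e,l'}$ with $l\in S_{d(h_i)}$, $l'\in S_{d(h_j)}$, $i\ne j$ forces $l\ne l'$ hence vanishes. Each $\omega_{e,h_i}$ is idempotent because it is a sum of orthogonal idempotents. Centrality in $B_e$ is inherited: each $w_{e,l}$ need not be central (Lemma~\ref{lema32}(iv)), but the sum over a full orbit is central because $U_e$ permutes the summands and $B_e=B_0w_e$ is generated as a $K$-algebra by the $w_{e,l}$ together with $U_e$ (indeed by elements $a_g\delta_g\#v_h$ with $g\in G_e$, $h\in T_e$, and such an element times $\omega_{e,h_i}$ is computed using the same kind of manipulation as in the proof of Lemma~\ref{lema32}(ii)); alternatively one checks directly that $x\,\omega_{e,h_i}=\omega_{e,h_i}\,x$ for $x=a_g\delta_g\#v_h\in B_e$ by the same computation as for $xw_e=w_ex$, observing that multiplication by $a_g\delta_g$ on the left (with $g\in G_e$) preserves the domain identity of the $KG^*$-component.

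For part (iv), once $\omega_{e,h_i}$ is a central idempotent of the unital algebra $B_e$, the ideal $B_e\omega_{e,h_i}$ is automatically a unital subalgebra with identity $\omega_{e,h_i}$; the explicit description $\bigoplus_{g\in G_e,\ l\in T_e\cap S_{d(h_i)}}E_g\delta_g\#v_l$ follows by multiplying the description $B_e=\sum_{g\in G_e,\ h\in T_e}E_g\delta_g\#v_h$ from Lemma~\ref{lema32}(iii) on the right by $\omega_{e,h_i}$ and using $(a_g\delta_g\#v_h)\omega_{e,h_i}=a_g\delta_g\#v_h$ if $h\in S_{d(h_i)}$ and $0$ otherwise (the same selection computation as in Lemma~\ref{lema32}(iv)). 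Part (v) is then immediate from (ii) and (iii): the $\omega_{e,h_i}$ are orthogonal central idempotents summing to $1_{B_e}=w_e$, so $B_e=\bigoplus_i B_e\omega_{e,h_i}=\bigoplus_i B_{e,h_i}$.

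I expect no serious obstacle here; the content is bookkeeping. The one point requiring genuine care is the centrality claim in (iii): one must make sure that ``average over the orbit is central'' is justified, i.e.\ that conjugation by an arbitrary unit of $B_e$ (not merely by elements of $U_e$) fixes $\omega_{e,h_i}$ — this is why I would prefer the direct computation $x\,\omega_{e,h_i}=\omega_{e,h_i}\,x$ for a general homogeneous $x=a_g\delta_g\#v_h\in B_e$, which goes through exactly as the corresponding computation for $w_e$ in Lemma~\ref{lema32}(i)--(ii) because left multiplication by $a_g\delta_g$ with $g\in G_e$ does not change which $S_{d(\cdot)}$-class the surviving $v$-index falls into (since $gh$ has the same domain as $h$).
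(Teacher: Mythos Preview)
Your proposal is correct and follows essentially the same route as the paper: parts (i), (ii), (iv), (v) are handled by the orbit description from Lemma~\ref{lema33}(iii) and the central-idempotent formalism, exactly as you outline, and for the centrality in (iii) the paper also opts for the direct computation of $\omega_{e,h_i}x$ for a generic $x=a_l\delta_l\#v_k\in B_e$, obtaining $x$ when $d(k)=d(h_i)$ and $0$ otherwise. Your caution about the ``orbit-average'' argument is well placed---the paper avoids it too---and your reason why the direct computation succeeds (that $d(gh)=d(h)$ for $g\in G_e$) is precisely the point.
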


\begin{proof}

(i) It follows from Lemma \ref{lema33}(iii).

\vd

(ii) Immediate.

\vd

(iii) Notice that each $\omega_{e,h_i}$ is a sum of orthogonal
idempotents of $W_e$, which are all orthogonal. So, it is immediate to check that
all the $\omega_{e,h_i}$, $1\leq i\leq n_e$, are orthogonal.

\vu

It remains to show that such idempotents are central in $B_e$.

\vu

Take $x=a_l\delta_l\#v_k\in B_e$. Then, $l\in G_e$, $k\in T_e$ and
\begin{align*}
\omega_{e,h_i}x&=(1_e\delta_e\#\sum\limits_{t\in T_e\bigcap
S_{d(h_i)}}v_t)(a_l\delta_l\#v_k)\\
&=\sum\limits_{t\in T_e\bigcap
S_{d(h_i)}}(1_e\delta_e\#v_t)(a_l\delta_l\#v_k)\\
&=\sum\limits_{t\in T_e\bigcap S_{d(h_i)}}\sum\limits_{d(s)=d(t)}
1_e\delta_e(v_{ts^{-1}}\cdot a_l\delta_l)\#v_sv_k\\
&=\begin{cases}
\sum\limits_{t\in T_e\bigcap
S_{d(h_i)}}1_e\delta_e(v_{tk^{-1}}\cdot a_l\delta_l)\#v_k &\text{if $d(k)=d(h_i)$}\\
0 &\text{otherwise}
\end{cases}\\
&=\begin{cases}
a_l\delta_l\#v_k &\text{if $d(k)=d(h_i)$}\\
0 &\text{otherwise},
\end{cases}
\end{align*}
since $v_{tk^{-1}}\cdot a_l\delta_l\neq 0$ if and only if $t=lk$ and, in this case,
$d(t)=d(lk)=d(k)$ and $r(t)=r(lk)=r(l)=e$.

\vu

By a similar calculation one obtains $x\omega_{e,h_i}=\omega_{e,h_i}x$.

\vd

(iv) It follows from (iii).

\vd

(v) It follows from (ii) and (iii).
\end{proof}

\begin{lema}\label{lema35}
For each $e\in G_0$ and $1\leq i\leq n_e$,
\begin{itemize}
\item[(i)]
$\Omega_{e,h_i}=\{\omega_{e,h_i,l}:=1_e\delta_e\#v_l\,\, | \,\,
l\in T_e\cap S_{d(h_i)}\}$ is a set of noncentral orthogonal idempotents of
$B_{e,h_i}$ whose sum is $1_{B_{e,h_i}}=\omega_{e,h_i}$,

\vu

\item[(ii)] the set
$U_{e,h_i}=\{u_{g,e,h_i}:=1_g\delta_g\#\sum\limits_{l\in T_e\bigcap
S_{d(h_i)}}v_l\,\, | \,\, g\in G_e\}$ is a subgroup of $U_e$,

\vu

\item[(iii)] $U_{e,h_i}$ acts transitively on $\Omega_{e,h_i}$ by conjugation.
\end{itemize}
\end{lema}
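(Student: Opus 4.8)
The plan is to verify the three items of Lemma \ref{lema35} by direct computation, reusing the identifications already made in Lemmas \ref{lema32}, \ref{lema33} and \ref{lema34}. Throughout, I fix $e\in G_0$ and $1\le i\le n_e$, and I write $T:=T_e\cap S_{d(h_i)}$ for brevity (the set of $l\in G$ with $r(l)=e$ and $d(l)=d(h_i)$); by Lemma \ref{lema34}(ii) this is precisely the index set appearing in $\omega_{e,h_i}=1_e\delta_e\#\sum_{l\in T}v_l$ and in $B_{e,h_i}=\bigoplus_{g\in G_e,\,l\in T}E_g\delta_g\#v_l$.

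For (i), I would compute $\omega_{e,h_i,l}\,\omega_{e,h_i,m}=(1_e\delta_e\#v_l)(1_e\delta_e\#v_m)$ using the weak smash product rule; since $v_{lm^{-1}}\cdot 1_e\delta_e\neq 0$ forces $lm^{-1}=e$, i.e. $l=m$ (here I use $r(l)=r(m)=e$), this product is $\omega_{e,h_i,l}$ if $l=m$ and $0$ otherwise, so the $\omega_{e,h_i,l}$ are orthogonal idempotents; their sum over $l\in T$ is $\omega_{e,h_i}$ by definition, and $\omega_{e,h_i}=1_{B_{e,h_i}}$ by Lemma \ref{lema34}(iv). That they are noncentral is seen exactly as the noncentrality of the $w_{e,l}$ in Lemma \ref{lema32}(iv): for $x=a_g\delta_g\#v_k\in B_{e,h_i}$ one gets $x\,\omega_{e,h_i,l}=x$ if $l=k$ and $0$ otherwise, whereas $\omega_{e,h_i,l}\,x=x$ iff $l=gk$ and $0$ otherwise, and these disagree whenever $g\notin G_{d(h_i)}$ fails to fix $k$ — concretely, picking any $g\in G_e$ with $gk\neq k$ (possible once $|T|>1$, say) exhibits $x$ with $x\,\omega_{e,h_i,l}\neq \omega_{e,h_i,l}\,x$.

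For (ii), $U_{e,h_i}\subseteq U_e$ is clear from the definitions since $T\subseteq T_e$; to see it is a subgroup I would redo the two computations of Lemma \ref{lema33}(i) with $\sum_{h\in T_e}$ replaced by $\sum_{h\in T}$. The only point needing care is that the sums stay inside $T$: in $u_{g,e,h_i}u_{l,e,h_i}$ the surviving terms are indexed by $v_{l^{-1}h}$ with $h\in T$, and $r(l^{-1}h)=r(l^{-1})=d(l)=e$ while $d(l^{-1}h)=d(h)=d(h_i)$, so $l^{-1}h$ runs over $T$ as $h$ does; together with $1_g=1_l=1_e$ and $gl\in G_e$ this gives $u_{g,e,h_i}u_{l,e,h_i}=u_{gl,e,h_i}\in U_{e,h_i}$, and similarly $u_{g,e,h_i}u_{g^{-1},e,h_i}=1_e\delta_e\#\sum_{h\in T}v_h=\omega_{e,h_i}=1_{B_{e,h_i}}$. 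For (iii), the conjugation computation of Lemma \ref{lema33}(ii), restricted to the sum over $T$, yields $u_{g,e,h_i}\,\omega_{e,h_i,l}\,u_{g^{-1},e,h_i}=1_e\delta_e\#v_{gl}=\omega_{e,h_i,gl}$, with $r(gl)=r(g)=e$ and $d(gl)=d(l)=d(h_i)$ so $gl\in T$; thus the orbit of $\omega_{e,h_i,l}$ is $\{\omega_{e,h_i,gl}\mid g\in G_e\}$. Transitivity then amounts to: given $l,m\in T$, there is $g\in G_e$ with $gl=m$; this holds by Lemma \ref{lema21}(x) because $d(m)=d(l)=d(h_i)$ forces $m=gl$ for some $g\in G$, and then $r(g)=r(m)=e=d(g)$ since $d(g)=r(l)=e$, so $g\in G_e$.

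The main obstacle is nothing conceptually deep but rather bookkeeping: making sure every index appearing after a product collapse genuinely lies in the restricted set $T=T_e\cap S_{d(h_i)}$ (not merely in $T_e$), which is what separates this lemma from Lemma \ref{lema33}, and — for transitivity — invoking Lemma \ref{lema21}(x) correctly together with the domain/range constraints to land the solving element $g$ inside the isotropy group $G_e$ rather than in a larger hom-set. Once those membership checks are in place, all computations are routine applications of the weak smash product and skew groupoid ring multiplication rules already exploited above.
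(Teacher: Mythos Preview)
Your approach is essentially the paper's: (i) is reduced to Lemma~\ref{lema32}(iv) together with Lemma~\ref{lema34}(ii)(iv), and (ii)--(iii) are obtained by rerunning the computations of Lemma~\ref{lema33} with the summation restricted from $T_e$ to $T=T_e\cap S_{d(h_i)}$, checking that the surviving indices remain in $T$; your explicit transitivity argument via Lemma~\ref{lema21}(x) is in fact more detailed than the paper's bare ``clearly transitive''.

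One slip to correct: the sentence ``$U_{e,h_i}\subseteq U_e$ is clear from the definitions since $T\subseteq T_e$'' is false as a literal set inclusion, because $1_g\delta_g\#\sum_{l\in T}v_l$ is \emph{not} of the form $1_g\delta_g\#\sum_{h\in T_e}v_h$ when $T\subsetneq T_e$ (rather, $u_{g,e,h_i}=u_{g,e}\,\omega_{e,h_i}$, the projection of $u_{g,e}$ onto the summand $B_{e,h_i}$). This does not damage the argument: what you actually establish, and what Lemma~\ref{lema31} requires, is that $U_{e,h_i}$ is a group of units of $B_{e,h_i}$ with identity $\omega_{e,h_i}$. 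The paper's phrase ``subgroup of $U_e$'' is itself loose on this point, and its proof of (ii), like yours, only verifies closure and inverses within $U_{e,h_i}$.
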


\begin{proof}
(i) It follows from Lemma \ref{lema32}(iv) and Lemma \ref{lema34}(ii)(iv).

\vd

(ii) Take $u_{l,e,h_i}=1_l\delta_l\#\sum\limits_{k\in T_e\bigcap S_{d(h_i)}}v_k$
and  $u_{t,e,h_i}=1_t\delta_t\#\sum\limits_{s\in T_e\bigcap S_{d(h_i)}}v_s$ in $U_{e,h_i}$. Then,
\begin{align*}
u_{l,e,h_i}u_{t,e,h_i}&=(1_l\delta_l\#\sum\limits_{k\in T_e\bigcap
S_{d(h_i)}}v_k)(1_t\delta_t\#\sum\limits_{s\in T_e\bigcap
S_{d(h_i)}}v_s)\\
&=\sum\limits_{k,s\in T_e\bigcap
S_{d(h_i)}}(1_l\delta_l\#v_k)(1_t\delta_t\#v_s)\\
&=\sum\limits_{k,s\in T_e\bigcap
S_{d(h_i)}}(1_l\delta_l)(v_{ks^{-1}}\cdot1_t\delta_t)\#v_s\\
&=\sum\limits_{s\in T_e\bigcap S_{d(h_i)}}(1_l\delta_l)(1_t\delta_t)\#v_s\\
&=1_{lt}\delta_{lt}\#\sum\limits_{s\in T_e\bigcap
S_{d(h_i)}}v_s,
\end{align*}
because $l\in G_e$, $t\in T_e$ and $1_{l}=1_{r(l)}=1_{r(lt)}=1_{lt}$, which implies
$(1_l\delta_l)(1_t\delta_t)=\beta_l(1_t)\delta_{lt}=\beta_l(1_{r(t)})\delta_{lt}=
\beta_l(1_{d(l)})\delta_{lt}=1_l\delta_{lt}=1_{lt}\delta_{lt}$. And such a product
belongs to $U_{e,h_i}$ because $r(lt)=r(l)=e=d(t)=d(lt)$.

\vu

Finally, it is straightforward to check that $u_{l^{-1},e,h_i}=1_{l^{-1}}\delta_{l^{-1}}\#\sum\limits_{t\in
T_e\bigcap S_{d(h_i)}}v_t$ is the inverse of  $u_{l,e,h_i}$, and clearly it also belongs to $U_{e,h_i}$.

\vt

(iii) Take $u_{g,e,h_i}=1_g\delta_g\#\sum\limits_{l\in T_e\cap
S(d(h_i))}v_l\in U_{e,h_i}$ and
$\omega_{e,h_i,k}=1_e\delta_e\#v_k\in W_{e,h_i}$. By a calculation identical to that done in the proof
of the item (ii) of Lemma \ref{lema33} one gets
$$u_{g,e,h_i}\omega_{e,h_i,k}u_{g^{-1},e,h_i}=1_e\delta_e\#v_{gk},$$ and this action is clearly transitive.
\end{proof}

\vu

\begin{prop}\label{prop36}
$B_{e,h_i}\simeq M_{n_{e,h_i}}(E_e)$ as unital $K$-algebras, for all $e\in G_0$ and $1\leq i\leq n_e$.
\end{prop}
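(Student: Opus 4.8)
The plan is to apply Passman's lemma (Lemma \ref{lema31}) to the unital ring $S=B_{e,h_i}$ with the given decomposition of its identity. By Lemma \ref{lema35}(i), $1_{B_{e,h_i}}=\omega_{e,h_i}$ decomposes as the sum of the orthogonal idempotents $\omega_{e,h_i,l}=1_e\delta_e\#v_l$, indexed by $l\in T_e\cap S_{d(h_i)}$; call the cardinality of this index set $n_{e,h_i}$, so there are exactly $n_{e,h_i}$ such idempotents. By Lemma \ref{lema35}(ii)(iii), $U_{e,h_i}$ is a subgroup of the group of units of $B_{e,h_i}$ that permutes $\Omega_{e,h_i}$ transitively by conjugation. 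Passman's lemma then yields $B_{e,h_i}\simeq M_{n_{e,h_i}}\big(\omega_{e,h_i,l_0}\,B_{e,h_i}\,\omega_{e,h_i,l_0}\big)$ for any fixed $l_0\in T_e\cap S_{d(h_i)}$.

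It then remains to identify the corner ring $C:=\omega_{e,h_i,l_0}\,B_{e,h_i}\,\omega_{e,h_i,l_0}$ with $E_e$ as a unital $K$-algebra. I would compute this corner directly: a general element of $B_{e,h_i}$ is a sum of terms $a_g\delta_g\#v_k$ with $g\in G_e$ and $k\in T_e\cap S_{d(h_i)}$, and multiplying on both sides by $\omega_{e,h_i,l_0}=1_e\delta_e\#v_{l_0}$ forces, using the multiplication rule for the weak smash product together with the relations $v_{hk^{-1}}\cdot(1_e\delta_e)\neq 0 \iff h=k$ already exploited repeatedly in the proofs of Lemmas \ref{lema32}--\ref{lema35}, that only the terms with $g=e$ and $k=l_0$ survive. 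Hence $C=\{\,a\delta_e\#v_{l_0}\ |\ a\in E_e\,\}$. A short check shows the multiplication in $C$ reads $(a\delta_e\#v_{l_0})(b\delta_e\#v_{l_0})=(a\delta_e)(b\delta_e)\#v_{l_0}=ab\,\delta_e\#v_{l_0}$, since $\beta_e=I_{E_e}$, so the $K$-linear map $a\mapsto a\delta_e\#v_{l_0}$ is an isomorphism of unital $K$-algebras $E_e\xrightarrow{\ \sim\ }C$, sending $1_e$ to $\omega_{e,h_i,l_0}$. Combining with the previous paragraph gives $B_{e,h_i}\simeq M_{n_{e,h_i}}(E_e)$.

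The only genuine subtlety — and the step I would be most careful with — is the computation of the corner ring $C$: one must check that conjugation/multiplication by the idempotent $\omega_{e,h_i,l_0}$ on both sides really does kill every term except the $g=e$, $k=l_0$ one, and that the residual multiplication carries no twist (this is where $\beta_e=I_{E_e}$ and $d(e)=r(e)=e$ are used). Everything else is a direct invocation of Passman's lemma and of Lemma \ref{lema35}, so no further obstacle is expected; in particular the number $n_{e,h_i}=|T_e\cap S_{d(h_i)}|$ is well defined and finite because $G$ is finite.
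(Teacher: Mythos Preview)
Your proposal is correct and follows essentially the same route as the paper: invoke Passman's lemma (Lemma~\ref{lema31}) via Lemma~\ref{lema35} to get $B_{e,h_i}\simeq M_{n_{e,h_i}}(C)$ with $C=\omega_{e,h_i,l_0}B_{e,h_i}\omega_{e,h_i,l_0}$, then compute this corner explicitly to find $C=E_e\delta_e\#v_{l_0}\simeq E_e$. The paper carries out the corner computation in two steps (first $x\omega_{e,h_i,l_1}$ forces $k=l_1$, then left multiplication by $\omega_{e,h_i,l_1}$ forces $g=e$ via $v_e\cdot a_g\delta_g\neq 0\iff g=e$), which matches your outline.
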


\begin{proof} It follows from Lemmas \ref{lema31} and \ref{lema35} that $B_{e,h_i}\simeq M_{n_{e,h_i}}(C_{e,h_i})$,
where $$C_{e,h_i}=(\omega_{e,h_i,l_1})B_{e,h_i}(\omega_{e,h_i,l_1}),$$ with $l_1\in T_e\bigcap S_{d(h_i)}$. It remains
to prove that $C_{e,h_i}$ is isomorphic to $E_e$. Recalling from Lemma \ref{lema34} that any element of $B_{e,h_i}$ is
of the form $x=\sum\limits_{{g\in G_e}\atop {k\in T_e\cap S_{d(h_i)}}}a_g\delta_g\#v_k$, we have
\begin{align*}
x\omega_{e,h_i,l_1}&=\sum\limits_{{g\in G_e}\atop {k\in T_e\cap
S_{d(h_i)}}}a_g\delta_g(v_{kl_1^{-1}}\cdot1_e\delta_e)\#v_{l_1}\\
&=\sum\limits_{g\in G_e}(a_g\delta_g)(1_e\delta_e)\#v_{l_1}\\
&=\sum\limits_{g\in G_e}a_g\delta_g\#v_{l_1}.
\end{align*}
and
\begin{align*}
\omega_{e,h_i,l_1}x\omega_{e,h_i,l_1}&=(1_e\delta_e\#v_{l_1})(\sum\limits_{g\in
G_e}a_g\delta_g\#v_{l_1})\\
&=\sum\limits_{g\in
G_e}(1_e\delta_e)(v_{l_1l_1^{-1}}\cdot 1_g\delta_g)\#v_{l_1}\\
&=\sum\limits_{g\in G_e}(1_e\delta_e)(v_{e}.a_g\delta_g)\#v_{l_1}\\
&=(1_e\delta_e)(a_e\delta_e)\#v_{l_1}\\
&=a_e\delta_e\#v_{l_1}.
\end{align*}
Hence, $C_{e,h_i}=E_e\delta_e\#v_{l_1}=E_e(1_A\#v_{l_1})$, which is naturally isomorphic to
$E_e$ as $K$-algebras via the map $a_e\mapsto a_e\delta_e\#v_{l_1}=a_e(1_A\#v_{l_1})$.
Observe that this map is surjective by definition and it is straightforward  to check
that it is a homomorphism of $K$-algebras. Its injectivity follows from the freeness
of $1_A\# v_{l_1}$ over $A=R\star_\beta G$.
\end{proof}

\vu

\begin{teo}\label{teo37}

$$B_0\simeq\bigoplus\limits_{e\in
G_0}(\bigoplus\limits_{i=1}^{n_e}M_{n_{e,h_i}}(E_e))$$ as unital $K$-algebras.
\end{teo}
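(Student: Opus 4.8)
The plan is to assemble the theorem directly from the structural decompositions already established in Lemmas \ref{lema32} and \ref{lema34} together with the matrix identification of Proposition \ref{prop36}. First I would recall that Lemma \ref{lema32}(iii) gives the internal direct sum decomposition $B_0=\bigoplus_{e\in G_0}B_e$ as a direct sum of two-sided ideals, each $B_e$ being a unital $K$-algebra with identity $w_e$; consequently the ring structure of $B_0$ is the direct product of the ring structures of the $B_e$, and any $K$-algebra isomorphisms $B_e\simeq C_e$ assemble into an isomorphism $B_0\simeq\bigoplus_{e\in G_0}C_e$. This reduces the theorem to proving, for each fixed $e\in G_0$, that $B_e\simeq\bigoplus_{i=1}^{n_e}M_{n_{e,h_i}}(E_e)$ as unital $K$-algebras.

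Next, for each fixed $e$ I would invoke Lemma \ref{lema34}(iv)(v): the elements $\omega_{e,h_1},\dots,\omega_{e,h_{n_e}}$ are central orthogonal idempotents of $B_e$ summing to $w_e=1_{B_e}$, and $B_e=\bigoplus_{i=1}^{n_e}B_{e,h_i}$ is a direct sum of the two-sided ideals $B_{e,h_i}=B_e\omega_{e,h_i}$, each unital with identity $\omega_{e,h_i}$. As in the previous step, this decomposition into ideals means the ring structure of $B_e$ is the product of those of the $B_{e,h_i}$, so it suffices to identify each summand. But Proposition \ref{prop36} does exactly that: $B_{e,h_i}\simeq M_{n_{e,h_i}}(E_e)$ as unital $K$-algebras. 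Splicing these isomorphisms together over $1\le i\le n_e$ gives $B_e\simeq\bigoplus_{i=1}^{n_e}M_{n_{e,h_i}}(E_e)$, and then splicing over $e\in G_0$ yields the claimed formula.

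The only genuine point requiring care — and the step I expect to be the main (minor) obstacle — is the bookkeeping that a decomposition of a unital ring into a direct sum of two-sided ideals that are themselves unital rings is the same as a direct product decomposition of rings, so that algebra isomorphisms of the pieces glue to an algebra isomorphism of the whole; this is standard, but one should note explicitly that the identities match up ($1_{B_0}=w=\sum_{e}w_e$ by Lemma \ref{lema32}(i)(ii), $1_{B_e}=w_e=\sum_i\omega_{e,h_i}$ by Lemma \ref{lema34}(ii)) and that the $E_e$ appearing are exactly the ideals of the original action $\beta$ on $R$, which are unital by the standing hypothesis of the section. Everything else is a direct citation, so the proof is essentially a two-line assembly: apply Lemma \ref{lema32}(iii), then Lemma \ref{lema34}(v), then Proposition \ref{prop36}, and collect terms.
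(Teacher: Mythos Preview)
Your proposal is correct and follows exactly the same approach as the paper: the paper's proof is a single sentence citing Lemma~\ref{lema32}(iii), Lemma~\ref{lema34}(v), and Proposition~\ref{prop36}, and your write-up simply unpacks why those citations suffice. The extra care you take with the bookkeeping (identities matching up, direct sums of unital ideals giving a product decomposition) is sound and makes explicit what the paper leaves implicit.
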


\begin{proof}
It follows from Lemmas \ref{lema32}(iii) and \ref{lema34}(v), and Proposition \ref{prop36}.
\end{proof}

\section{Duality for groupoid coactions}

In all this section $G$ is a finite groupoid and $A$ is a unital $G$-graded $K$-algebra.
Recall from the subsection 2.5 that $A$ is a left $KG^*$-module algebra  via the action
$v_k\cdot\sum_{g\in G}a_g=a_k$, for all $k\in G$, and there exists an action $\beta$ of $G$ on
the corresponding weak smash product $B=A\# KG^*$. Let $C=B\star_\beta G$ be the corresponding
skew groupoid ring. Like in the section 3, also here we obtain a Cohen-Montgomery-type
duality theorem, that is, we show that the $K$-algebra $C$ contains a unital
subalgebra isomorphic to a finite direct sum of matrix $K$-algebras. In particular, if $G$
is a group we recover \cite[Theorem 3.5]{CM}.

\vu

The steps to get our purpose are similar to those in the previous section.
Recall from Proposition \ref{prop24} that $$C=\bigoplus\limits_{g\in
G}{(\bigoplus\limits_{d(k)=r(g)}A_l\#v_k)}\delta_g.$$
Let $$C_0=\bigoplus\limits_{e\in G_0}(\bigoplus\limits_{g\in
G_e}(\bigoplus\limits_{{r(l)=d(l)=r(k)}\atop
{d(k)=e}}A_l\#v_k)\delta_g).$$

\begin{lema}\label{lema41}
The following statements hold:
\begin{itemize}
\item[(i)] $C_0$ is a unital $K$-algebra with identity element
$w=\sum\limits_{e\in G_0}(\sum\limits_{f\in
G_0}1_f\#\sum\limits_{h\in T_f\bigcap S_e}v_h)\delta_e$,

\vu

\item[(ii)] $W=\{w_e:=(\sum\limits_{f\in G_0}1_f\#\sum\limits_{h\in
T_f\bigcap S_e}v_h)\delta_e\,\, | \,\, e\in G_0\}$ is a set of central orthogonal
idempotents of $C_0$, whose sum is $w$,

\vu

\item[(iii)] $C_0=\bigoplus\limits_{e\in G_0}C_e$, where
$C_e=C_0w_e=\bigoplus\limits_{g\in
G_e}(\bigoplus\limits_{{r(l)=d(l)=r(k)}\atop
{d(k)=e}}A_l\#v_k)\delta_g$ is a unital ideal (so, subalgebra) of
$C_0$ with identity element $w_e$.
\end{itemize}
\end{lema}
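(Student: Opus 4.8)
\textbf{Proof plan for Lemma \ref{lema41}.}

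The plan is to mirror, step by step, the structure of the proof of Lemma \ref{lema32}, working now inside $C=B\star_\beta G$ rather than inside $(R\star_\beta G)\#KG^*$. For item (i), I would first check that $C_0$ is a $K$-submodule of $C$ — this is immediate from the definition as a direct sum of the submodules $A_l\#v_k$ placed in degree $g$. Then I would take two homogeneous generators $x=(a_l\#v_k)\delta_g$ and $y=(b_t\#v_s)\delta_{g'}$ of $C_0$ and compute $xy$ using the skew-groupoid multiplication $(x\delta_g)(y\delta_{g'})=x\beta_g(y)\delta_{gg'}$ together with the formula for $\beta_g$ and for the weak smash product multiplication. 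Since $g,g'\in G_e$ for a common $e$, the composite $gg'$ exists and again lies in $G_e$; the key index-chasing is to verify that $v_{kt^{-1}}\cdot(\text{something})\neq 0$ forces the new $KG^*$-index to land in $T_f\cap S_e$ for the appropriate $f\in G_0$, so $xy$ again sits in $C_0$. Then I would verify $xw=x=wx$ by a direct computation, exactly as in Lemma \ref{lema32}(i): multiplying $x$ by the proposed unit $w$ collapses the sum over $f$ and over $h\in T_f\cap S_e$ to the single surviving term, using that $v_{kt^{-1}}\cdot 1_f\#(\cdots)\neq 0$ pins down $t$ and $f$, and then using $(a_l\#v_k)\beta_e(1_f\#v_k)=(a_l\#v_k)(1_{r(k)}\#v_k)=a_l\#v_k$ via Remark \ref{obs24}(v) and the identity $\beta_e=I_{E_e}$.

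For item (ii), I would compute $w_ew_f$ for $e,f\in G_0$. Writing each $w_e$ as a sum of $(1_{f'}\#v_h)\delta_e$ over $f'\in G_0$ and $h\in T_{f'}\cap S_e$, the product $(1_{f'}\#v_h)\delta_e\cdot(1_{f''}\#v_{h'})\delta_f$ equals $(1_{f'}\#v_h)\beta_e(1_{f''}\#v_{h'})\delta_{ef}$; since $\delta_e\delta_f$ exists only when $d(e)=r(f)$, i.e.\ $e=f$, and since $v_{h(h')^{-1}}\cdot(1_{f''}\#v_{h'})\neq 0$ forces $h=h'$ and $f'=f''$, the product reduces to $w_e$ when $e=f$ and to $0$ otherwise, so $W$ is a set of orthogonal idempotents summing to $w$. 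Centrality of $w_e$ is then checked against an arbitrary generator $x=(a_l\#v_k)\delta_g\in C_0$ with $g\in G_{e'}$: computing $xw_e$ one finds it equals $x$ if $e=e'$ (equivalently $d(g)=r(g)=e$ and $k\in S_e$, which holds for generators of $C_0$) and $0$ otherwise, and symmetrically for $w_ex$; this is the same pattern as Lemma \ref{lema32}(ii). Item (iii) is then formal: from (ii), $C_0=\bigoplus_{e\in G_0}C_0w_e$, each $C_0w_e$ is a (two-sided) ideal because $w_e$ is central idempotent, and one identifies $C_0w_e$ with the displayed submodule $\bigoplus_{g\in G_e}(\bigoplus_{r(l)=d(l)=r(k),\,d(k)=e}A_l\#v_k)\delta_g$ by checking which generators of $C_0$ are fixed by right multiplication by $w_e$.

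The main obstacle is purely bookkeeping: keeping straight the \emph{three} simultaneous indices attached to each basis element — the grading index $l$ of $A$, the $KG^*$-index $k$, and the skew-groupoid index $g$ — together with the constraints $g\in G_e$, $d(k)=e$, and $r(l)=d(l)=r(k)$, and making sure at each multiplication that the surviving terms still satisfy all of these. In particular, when forming $\beta_g(b_t\#v_s)=b_t\#v_{sg^{-1}}$ one must confirm $d(sg^{-1})$ and $r(sg^{-1})$ land where needed (via Lemma \ref{lema21}(vi) and $g\in G_e$), and when forming the smash product one must correctly track the condition $d(k)=d(\text{new index})$ and the nonvanishing condition $v_{kt^{-1}}\cdot(-)\neq 0$. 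No new idea beyond Lemma \ref{lema32} is required; the verification that the stated $w$ is a two-sided identity and that the $w_e$ are central is the place where the index constraints defining $C_0$ (as opposed to all of $C$) are genuinely used, and that is where I would be most careful.
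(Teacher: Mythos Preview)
Your proposal is correct and follows essentially the same approach as the paper: both proofs proceed by direct index-chasing on homogeneous generators $(a_l\#v_k)\delta_g$, first checking closure of $C_0$ under multiplication, then verifying that $w$ is a two-sided unit and that the $w_e$ are central orthogonal idempotents, with (iii) following formally. The only minor slip is that in verifying $xw=x$ you invoke $\beta_e$ where it should be $\beta_g$ (the identity $\beta_e=I_{E_e}$ is what appears in the $wx$ computation, which the paper carries out explicitly), but this is cosmetic and the bookkeeping you outline is exactly what the paper does.
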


\begin{proof}
(i) Clearly, $C_0$ is a $K$-submodule of $C$. Now, take $x=(a_l\#v_k)\delta_g$ and
$y=(b_s\#v_t)\delta_h$ in $C_0$. Then, $r(l)=d(l)=r(k)$, $d(k)=e$, $r(s)=d(s)=r(t)$,
$d(t)=f$, $g\in G_e$ and $h\in G_f$, with $e,f\in G_0$. If $e\neq f$ it is immediate that
$xy=0$. Otherwise, we have
\[
\begin{array}{ccl}
xy&=& (a_l\#v_k)\beta_g(b_s\#v_t)\delta_{gh}\\
&=& (a_l\#v_k)(b_s\#v_{gt^{-1}})\delta_{gh}\\
&=&(a_l(v_{k(tg^{-1})}.b_s)\#v_{tg^{-1}})\delta_{gh}\\
&=&(a_lb_s\#v_{tg^{-1}})\delta_{gh},\\
&=&(a_lb_s\#v_{s^{-1}k})\delta_{gh}\
\end{array}
\]
if $s=kgt^{-1}$ or, equivalently, $tg^{-1}=s^{-1}k$.  Observing that
\begin{itemize}
\item[---] $a_lb_s\in A_{ls}$, because $d(l)=r(k)=r(s)$,

\vu

\item[---] $d(ls)=d(s)$, $r(ls)=r(l)=r(k)=r(s)=d(s)$
and $r(s^{-1}k)=r(s^{-1})=d(s)$,

\vu

\item[---] $d(s^{-1}k)=d(k)=e=r(g)=r(gh)$, and

\vu

\item[---] $gh\in G_e$
\end{itemize}
we conclude that $xy\in C_0$. It remains to prove that
$w=\sum\limits_{e\in G_0}(\sum\limits_{f\in
G_0}1_f\#\sum\limits_{h\in T_f\bigcap S_e}v_h)\delta_e$ is the
identity element of $C_0$. Indeed, taking $x=(a_l\#v_k)\delta_g$,
with $g\in G_{e'}$, for some $e'\in G_0$, $r(l)=d(l)=r(k)$
and  $d(k)=e'$, we have
\begin{align*}
wx&=\sum\limits_{e\in G_0}(\sum\limits_{f\in
G_0}1_f\#\sum\limits_{h\in T_f\bigcap
S_e}v_h)\delta_e(a_l\#v_k)\delta_g\\
&=(\sum\limits_{f\in G_0}1_f\#\sum\limits_{h\in T_f\bigcap
S_{e'}}v_h)\delta_{e'}(a_l\#v_k)\delta_g\\
&=(\sum\limits_{f\in G_0}1_f\#\sum\limits_{h\in T_f\bigcap
S_{e'}}v_h)\beta_{e'}(a_l\#v_k)\delta_g\\
&=(\sum\limits_{f\in G_0}\sum\limits_{h\in T_f\bigcap S_{e'}}
(1_f\#v_h)(a_l\#v_k))\delta_g\\
&=(\sum\limits_{f\in G_0}\sum\limits_{h\in T_f\bigcap S_{e'}}
1_f(v_{hk^{-1}}\cdot a_l)\#v_k)\delta_g\\
&=(1_{r(l)}a_l\#v_k)\delta_g.\\
&=(a_l\#v_k)\delta_g\quad(\text{by Remark \ref{obs24}(v)})\\
&=x
\end{align*}
One easily verifies that $xw=x$ by the same way.

\vd

(ii) Let $e,e'\in G_0$. It is clear that $w_ew_{e'}=0$ if $e\neq e'$.
Otherwise we have
\[
\begin{array}{ccl}
w_ew_{e'}&=&[(\sum\limits_{f\in G_0}1_f\#\sum\limits_{h\in
T_f\bigcap S_e}v_h)\delta_e][(\sum\limits_{f'\in G_0}1_{f'}\#
\sum\limits_{l\in T_{f'}\bigcap S_{e'}}v_l)\delta_{e}]\\
&=& (\sum\limits_{f\in G_0}1_f\#\sum\limits_{h\in T_f\bigcap S_e}v_h)
(\sum\limits_{f'\in G_0}1_{f'}\#
\sum\limits_{l\in T_{f'}\bigcap S_e}v_l)\delta_e\\
&=&\sum\limits_{f,f'\in G_0}\sum\limits_{{h\in T_f\bigcap S_e}\atop
{l\in T_{f'}\bigcap S_e}}(1_f\#v_h)(1_{f'}\#v_l)\delta_e\\
&=&\sum\limits_{f,f^{'}\in G_0}\sum\limits_{{h\in T_f\bigcap S_e}\atop
{l\in T_{f'}\bigcap S_e}}1_f(v_{hl^{-1}}\cdot 1_{f'})\#v_l\delta_e\\
&=&\sum\limits_{f\in G_0}1_f\#\sum\limits_{h\in T_f\bigcap S_e}v_h\delta_e\\
&=& w_e\
\end{array}
\]
because $v_{hl^{-1}}\cdot1_{f^{'}}\neq 0$ if and only if $h=l$,
and, in this case, $f=f'$.

\vu

It remains to prove that $w_e$ is central in $C_0$, for all $e\in G_0$.
Let $x=(a_l\#v_k)\delta_g\in C_0$, that is, $g\in G_{e'}$, for some
$e'\in G_0$, $r(l)=d(l)=r(k)$ and $d(k)=e'$. Again, it is clear that
$xw_e=0$ if $e\neq e'$. Otherwise,
\[
\begin{array}{ccl}
xw_e&=&\sum\limits_{f\in G_0}\sum\limits_{h\in T_f\bigcap S_e}(a_l\#v_k)
\beta_g(1_f\#v_h)\delta_{ge}\\
&=&\sum\limits_{f\in G_0}\sum\limits_{h\in T_{f}\bigcap S_e}(a_l\#v_k)
(1_f\#v_{hg^{-1}})\delta_{ge}\\
&=&\sum\limits_{f\in G_0}\sum\limits_{h\in T_f\bigcap S_e}
(a_l(v_{kgh^{-1}}\cdot 1_f)\#v_{hg^{-1}})\delta_{g}\\
&=&(a_l1_{r(k)}\#v_k)\delta_{g}\\
&=&x\
\end{array}
\]
by Remark \ref{obs24}(v). Similarly, one also gets $w_ex=x$.

\vd

(iii) It is immediate from the above.
\end{proof}

\vd

\begin{lema}\label{lema42}
The following statements hold:
\begin{itemize}
\item[(i)] For any $f\in G_0$,
$W_f=\{w_{e,f}:=(1_f\#\sum\limits_{h\in T_f\bigcap
S_e}v_h)\delta_e\,\, | \,\, e\in G_0\}$ is a set of central orthogonal
idempotents of $C_e$, whose sum is $w_e$,

\vu

\item[(ii)] For any  $e\in G_0$, $C_e=\bigoplus\limits_{f\in
G_0}C_{e,f}$ where $C_{e,f}=C_ew_{e,f}=\bigoplus\limits_{g\in
G_e}(\bigoplus\limits_{{l\in G_f}\atop {k\in T_f\bigcap
S_e}}A_l\#v_k)\delta_g$ is a unital ideal (so, subalgebra)
of $C_e$, with identity element $w_{e,f}$.
\end{itemize}
\end{lema}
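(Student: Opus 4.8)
The plan is to mirror exactly the structure of the analogous results in Section~3, in particular Lemma~\ref{lema34}, since $W_f$ here plays the role there played by the family $\{\omega_{e,h_i}\}$: both arise as the sums of elements of $W_e$ lying in a fixed $U_e$-orbit, hence partition the identity of $C_e$ into central orthogonal idempotents indexed by the relevant domain identities. Concretely, for part~(i) I would first observe that each $w_{e,f}$ is an idempotent: the product $w_{e,f}w_{e,f}$ unwinds, using the multiplication rule $(a\#v_k)\delta_g)(b\#v_t)\delta_h)$ combined with the formula $\beta_g(b_s\#v_t)=b_s\#v_{tg^{-1}}$ and the fact that $v_{hl^{-1}}\cdot 1_f\neq 0$ exactly when $h=l$, so that only the diagonal terms survive and one recovers $(1_f\#\sum_{h\in T_f\cap S_e}v_h)\delta_e=w_{e,f}$. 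Orthogonality $w_{e,f}w_{e,f'}=0$ for $f\neq f'$ follows from the same computation: the surviving terms require $h\in T_f\cap S_e$ and $h\in T_{f'}\cap S_e$ simultaneously, which forces $r(h)=f=f'$, a contradiction. That the sum over $f\in G_0$ of the $w_{e,f}$ equals $w_e$ is immediate from the definition of $w_e$ in Lemma~\ref{lema41}(ii) together with the disjoint decomposition $T_{\,\cdot}\bigcap S_e$ over the range identities $f\in G_0$.

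Next, for centrality in $C_e$, I would take an arbitrary $x=(a_l\#v_k)\delta_g\in C_e$ (so $g\in G_e$, $r(l)=d(l)=r(k)$, $d(k)=e$) and compute $x w_{e,f}$ and $w_{e,f}x$ separately, following the template of the centrality argument in Lemma~\ref{lema41}(ii). In $x w_{e,f}$, applying $\beta_g$ to $1_f\#v_h$ turns it into $1_f\#v_{hg^{-1}}$, and the weak-smash multiplication forces $v_{kg(hg^{-1})^{-1}}\cdot 1_f=v_{k(h)^{-1}}\cdot 1_f\neq 0$, i.e.\ $h=k$ — but that requires $k\in T_f\cap S_e$, i.e.\ $r(k)=f$; using $r(k)=r(l)=d(l)$ and Remark~\ref{obs24}(v) one gets $a_l 1_{r(k)}=a_l$, so $x w_{e,f}=x$ if $r(l)=f$ and $0$ otherwise. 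The computation of $w_{e,f}x$ is symmetric and yields the same answer, which both proves centrality and identifies $w_{e,f}$ as picking out the summand of $C_e$ whose $A$-degree $l$ satisfies $r(l)=d(l)=f$. This simultaneously establishes part~(ii): since the $w_{e,f}$ are central orthogonal idempotents summing to $w_e=1_{C_e}$, we get $C_e=\bigoplus_{f\in G_0}C_e w_{e,f}$, and the explicit description $C_{e,f}=\bigoplus_{g\in G_e}(\bigoplus_{l\in G_f,\;k\in T_f\cap S_e}A_l\#v_k)\delta_g$ is read off from the above by noting that $r(l)=d(l)=r(k)=f$ together with $d(k)=e$ says exactly $l\in G_f$ and $k\in T_f\cap S_e$; that $C_{e,f}$ is an ideal of $C_e$ is automatic once $w_{e,f}$ is a central idempotent, and it is unital with identity $w_{e,f}$ by Passman-type reasoning (or directly).

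The only genuine obstacle I anticipate is bookkeeping: keeping straight the four index conditions $r(l)=d(l)=r(k)$, $d(k)=e$, $g\in G_e$, and $h\in T_f\cap S_e$ simultaneously while expanding products, and making sure the indices surviving after each application of $v_{(\cdot)}\cdot 1_f\neq 0\Leftrightarrow$ (equality of subscripts) are precisely those claimed. There is no conceptual difficulty — everything is a specialization of Lemmas~\ref{lema41} and \ref{lema32}–\ref{lema34} to the coaction side — so the write-up can safely abbreviate the symmetric halves (``by a similar calculation one obtains $w_{e,f}x = x w_{e,f}$'', ``$xw=x$ is checked the same way'') exactly as the earlier proofs do.
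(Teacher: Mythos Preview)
Your approach is exactly the paper's: compute $w_{e,f}w_{e,f'}$ directly to get orthogonality and idempotency, then compute $xw_{e,f}$ (and symmetrically $w_{e,f}x$) for a generic $x=(a_l\#v_k)\delta_g\in C_e$ to obtain $xw_{e,f}=x$ if $r(k)=f$ and $0$ otherwise, from which (ii) is immediate. One small slip in your sketch: in the product $(a_l\#v_k)(1_f\#v_{hg^{-1}})$ the relevant index is $k(hg^{-1})^{-1}=kgh^{-1}$, so the surviving term is $h=kg$ (not $h=k$); since $r(kg)=r(k)$ this still gives $r(k)=f$ and $v_{hg^{-1}}=v_k$, so your conclusion is unaffected, but be careful with this when you write it out.
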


\begin{proof}
(i) \, For any $e,f,f'\in G_0$ we have
\begin{align*}
w_{e,f}w_{e,f'}&=(1_f\#\sum\limits_{h\in T_f\bigcap
S_e}v_h)\delta_e(1_{f'}\#\sum\limits_{l\in T_{f'}\cap
S_e}v_l)\delta_e\\
&=(1_f\#\sum\limits_{h\in T_f\bigcap
S_e}v_h)\beta_e(1_{f'}\#\sum\limits_{l\in T_{f'}\bigcap
S_e}v_l)\delta_e\\
&=\sum\limits_{h\in T_f\bigcap S_e}\sum\limits_{l\in T_{f'}\bigcap
S_e}(1_f\#v_h)(1_{f'}\#v_{le})\delta_e\\
&=\sum\limits_{h\in T_f\bigcap S_e}\sum\limits_{l\in T_{f'}\bigcap
S_e}(1_f\#v_h)(1_{f'}\#v_{l})\delta_e\\
&=\sum\limits_{h\in T_f\bigcap S_e}\sum\limits_{l\in T_{f'}\bigcap
S_e}(1_f(v_{hl^{-1}}\cdot 1_{f'})\#v_l)\delta_e\\
&=
\begin{cases}
(1_{f}\#\sum\limits_{h\in T_{f}\bigcap S_e}v_h)\delta_e=w_{e,f}  &\mbox{ if } f'=f\\
0 &\mbox{ otherwise}.
\end{cases}
\end{align*}

To show that each $w_{e,f}$ is central in $C_e$ take $x=(a_l\#v_k)\delta_g\in C_e$.
So, $g\in G_e$, $r(l)=d(l)=r(k)$, $d(k)=e$,  and
\begin{align*}
xw_{e,f}&=(a_l\#v_k)\beta_g(1_f\#\sum\limits_{h\in T_f\bigcap
S_e}v_h)\delta_g\\
&=(a_l\#v_k)(1_f\#\sum\limits_{h\in T_f\bigcap
S_e}v_{hg^{-1}})\delta_g\\
&=\sum\limits_{h\in T_f\bigcap
S_e}(a_l\#v_k)(1_f\#v_{hg^{-1}})\delta_g\\
&=\sum\limits_{h\in T_f\bigcap
S_e}(a_l(v_{kgh^{-1}}\cdot 1_f)\#v_{hg^{-1}})\delta_g\\
&=
\begin{cases}
(a_l\#v_k)\delta_g=x &\mbox{ if }f=r(k)\\
0 &\mbox{otherwise},
\end{cases}
\end{align*}
since $v_{kgh^{-1}}.1_f\neq 0$ if and only if $h=kg$ if and only if
$f=r(h)=r(k)$ (Lemma \ref{lema21}(ix)).
One also gets $w_{e,f}x=xw_{e,f}$ in a similar way.

\vd

(ii) It is clear from the above.
\end{proof}

\begin{lema}\label{lema43}
For each  $e,f\in G_0$,

\begin{itemize}
\item[(i)] $W_{e,f}=\{w_{e,f,h}:=(1_f\#v_h)\delta_e\,\, | \,\,
h\in T_f\bigcap S_e\}$ is a set of noncentral orthogonal idempotents, with
sum $w_{e,f}=1_{C_{e,f}}$,

\vu

\item[(ii)] $U_{e,f}=\{u_{e,f,g}:=(1_f\#\sum\limits_{l\in T_f\bigcap
S_e}v_l)\delta_g\,\, | \,\, g\in G_e\}$ is a subgroup of the group
of the units of $C_{e,f}$,

\vu

\item[(iii)] $U_{e,f}$ acts transitively on $W_{e,f}$ by conjugation.
\end{itemize}
\end{lema}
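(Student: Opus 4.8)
The plan is to prove the three assertions of Lemma \ref{lema43} by direct computation in the skew groupoid ring $C=B\star_\beta G$, exactly mirroring the corresponding steps (Lemma \ref{lema32}(iv), Lemma \ref{lema33}, Lemma \ref{lema35}) of the action case in Section 3. Throughout I fix $e,f\in G_0$ and work inside the unital subalgebra $C_{e,f}=C_ew_{e,f}=\bigoplus_{g\in G_e}(\bigoplus_{l\in G_f,\,k\in T_f\cap S_e}A_l\#v_k)\delta_g$ whose identity element is $w_{e,f}=(1_f\#\sum_{h\in T_f\cap S_e}v_h)\delta_e$, as established in Lemma \ref{lema42}(ii).

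For (i), I would compute the product of two generators $w_{e,f,h}=(1_f\#v_h)\delta_e$ and $w_{e,f,k}=(1_f\#v_k)\delta_e$ with $h,k\in T_f\cap S_e$. Since $(e,e)\in G^2$ and $\beta_e=I_{E_e}$, the product reduces to $((1_f\#v_h)(1_f\#v_k))\delta_e$; using the weak smash product rule and the fact that $v_{hk^{-1}}\cdot 1_f=v_{hk^{-1}}(u_\bullet)$-support forces $hk^{-1}$ to be the identity $f$ (equivalently $h=k$, recalling $r(h)=r(k)=f$ since $h,k\in T_f$), one gets $w_{e,f,h}w_{e,f,k}=\delta_{h,k}w_{e,f,h}$. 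Summing over $h\in T_f\cap S_e$ recovers $w_{e,f}$ by its definition, so $W_{e,f}$ is a set of orthogonal idempotents summing to $1_{C_{e,f}}$. Noncentrality follows by exhibiting an element that fails to commute with some $w_{e,f,h}$ — e.g. computing $x\,w_{e,f,l}$ versus $w_{e,f,l}\,x$ for $x=(a_l\#v_k)\delta_g$ with $g\in G_e$ nontrivial, where the right multiplication selects $k=l$ while the left multiplication selects $k=g l$ (as in Lemma \ref{lema32}(iv)); these differ in general.

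For (ii), I would take $u_{e,f,g}=(1_f\#\sum_{l\in T_f\cap S_e}v_l)\delta_g$ and $u_{e,f,g'}=(1_f\#\sum_{l\in T_f\cap S_e}v_l)\delta_{g'}$ with $g,g'\in G_e$, and show their product is $u_{e,f,gg'}$. The key identities are: $(g,g')\in G^2$ since $d(g)=e=r(g')$; $\beta_g(1_f\#v_l)=1_f\#v_{lg^{-1}}$; and the weak smash product multiplication collapses the double sum over $T_f\cap S_e$ to a single sum as in Lemma \ref{lema35}(ii), while $1_f\,1_f=1_f$ keeps the coefficient $1_f$ because both $\delta$-components have range identity $f$ (note $1_f=1_{r(\cdot)}$). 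One checks $gg'\in G_e$, so closure holds; the identity is $u_{e,f,e}=w_{e,f}$; and $u_{e,f,g^{-1}}$ is the two-sided inverse of $u_{e,f,g}$. The indexing set $T_f\cap S_e$ is stable under right translation by $G_e$ (since $r(lg^{-1})=r(l)=f$ and $d(lg^{-1})=d(g^{-1})=e$ whenever $l\in T_f\cap S_e$, using $d(l)=e$), which is exactly what makes the $v$-sum invariant — this is the one point deserving care.

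For (iii), I would compute the conjugate $u_{e,f,g}\,w_{e,f,k}\,u_{e,f,g^{-1}}$ for $g\in G_e$ and $k\in T_f\cap S_e$, and show it equals $w_{e,f,gk}$, by the same calculation as in the proof of Lemma \ref{lema33}(ii) and Lemma \ref{lema35}(iii): first $u_{e,f,g}w_{e,f,k}=(1_f\#v_k)\delta_g$ (the sum over $l$ collapses to $l=k$), then multiplying on the right by $u_{e,f,g^{-1}}$ collapses again and yields $(1_f\#v_{gk})\delta_e=w_{e,f,gk}$; here $r(gk)=r(g)=e$ and $d(gk)=d(k)$, so $gk\in T_e$, and since $d(gk)=d(k)=d(h_i)$-type condition is automatic — more precisely $gk\in T_f\cap S_e$? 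No: $r(gk)=e$, not $f$. I should instead note $w_{e,f,gk}$ has first component $1_f$ and $v_{gk}$; for this to lie in $W_{e,f}$ we need $gk\in T_f\cap S_e$, i.e. $r(gk)=f$ — but $r(gk)=r(g)=e$. Hence the correct statement is that conjugation sends $w_{e,f,k}$ with $r(k)=f$, $d(k)=e$ to an idempotent indexed by $gk$; since $g\in G_e$ acts and $G_e$ acts transitively on $T_f\cap S_e$ (given $h,k$ with $r(h)=r(k)=f$, $d(h)=d(k)=e$, the element $hk^{-1}\in G_e$ satisfies $(hk^{-1})k=h$), every element of $W_{e,f}$ is reached from any fixed one. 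This transitivity of $G_e$ on $T_f\cap S_e$ by left translation is the substantive input; the rest is bookkeeping with the multiplication rules. The main obstacle, as in Section 3, is keeping straight which $\delta$-index and which $v$-index the various orthogonality conditions constrain, and ensuring the idempotents $1_f$ do not get annihilated — but the preunit/unital structure from Lemma \ref{lema42} together with Remark \ref{obs24}(v) handles this cleanly.
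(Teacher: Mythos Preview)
Your approach for (i) and (ii) is essentially the paper's: direct computation in $C_{e,f}$, with the same orthogonality and closure checks. The one genuine gap is in (iii), where you mirror Section~3 too literally and get the wrong conjugate.

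In $(A\#KG^*)\star_\beta G$ the $\beta_g$ acts on the $v$-index, not on the $\delta$-index. Concretely,
\[
u_{e,f,g}\,w_{e,f,k}
=\Bigl(1_f\#\sum_{l}v_l\Bigr)\,\beta_g(1_f\#v_k)\,\delta_{ge}
=\Bigl(1_f\#\sum_{l}v_l\Bigr)(1_f\#v_{kg^{-1}})\,\delta_g
=(1_f\#v_{kg^{-1}})\delta_g,
\]
so the $v$-index has already shifted to $kg^{-1}$, contrary to your claim that the first step leaves it at $k$. Multiplying on the right by $u_{e,f,g^{-1}}$ then gives $(1_f\#v_{kg^{-1}})\delta_e=w_{e,f,kg^{-1}}$, not $w_{e,f,gk}$. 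Your own sanity check caught the problem: for $g\in G_e$ and $k\in T_f\cap S_e$ the product $gk$ is not even defined unless $e=f$, since $d(g)=e$ while $r(k)=f$. The right action $k\mapsto kg^{-1}$ is the one that makes sense and stays inside $T_f\cap S_e$ (because $r(kg^{-1})=r(k)=f$ and $d(kg^{-1})=r(g)=e$).

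Your transitivity argument inherits the same slip: given $h,k\in T_f\cap S_e$ you form $hk^{-1}$, but $r(hk^{-1})=r(h)=f$ and $d(hk^{-1})=r(k)=f$, so $hk^{-1}\in G_f$, not $G_e$. The element of $G_e$ you need is $g=k^{-1}h$ (well defined since $d(k^{-1})=f=r(h)$, with $r(k^{-1}h)=d(k)=e=d(h)=d(k^{-1}h)$), and then $kg^{-1}\!\cdot\!?$---rather, $k\cdot(k^{-1}h)=h$ under the right action, or equivalently $h\,g^{-1}=k$ in the paper's parametrization. Once you use the correct right action of $G_e$ on $T_f\cap S_e$, transitivity is immediate and the proof goes through exactly as in the paper.
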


\begin{proof}
(i) Let $e,f\in G_0$  and $h,l\in T_f\bigcap S_e$. Then,
\begin{align*}
w_{e,f,h}w_{e,f,l}&=(1_f\#v_h)\delta_e(1_f\#v_l)\delta_e\\
&=(1_f\#v_h)\beta_e(1_f\#v_l)\delta_e\\\
&=(1_f\#v_h)(1_f\#v_l)\delta_e\\
&=(1_f(v_{hl^{-1}}\cdot 1_f)\#v_l)\delta_e\\
&=
\begin{cases}
(1_f\#v_h)\delta_e=w_{e,f,h} &\mbox{ if }h=l\\
0 &\mbox{otherwise}.
\end{cases}
\end{align*}

Clearly, $\sum\limits_{h\in T_e\bigcap S_e}w_{e,f,h}=w_{e,f}=1_{C_{e,f}}$.
Also, $w_{e,f,h}$ is not central in $C_{e,f}$. Indeed, let
$x=(a_l\#v_k)\delta_g\in C_{e,f}$. So, $g\in G_e$, $l\in G_f$,
$k\in T_f\cap S_e$, and
\begin{align*}
w_{e,f,h}x&=(1_f\#v_h)\delta_e(a_l\#v_k)\delta_g\\
&=(1_f\#v_h)\beta_e(a_l\#v_k)\delta_{eg}\\
&=(1_f\#v_h)(a_l\#v_k)\delta_g\\
&=(1_f(v_{hk^{-1}}\cdot a_l)\#v_k)\delta_g\\
&=
\begin{cases}
x &\mbox{ if }h=lk\\
0 &\mbox{otherwise}.
\end{cases}
\end{align*}

On the other hand,
\begin{align*}
xw_{e,f,h}&=(a_l\#v_k)\delta_g(1_f\#v_h)\delta_e\\
&=(a_l\#v_k)\beta_g(1_f\#v_h)\delta_{ge}\\
&=(a_l\#v_k)(1_f\#v_{hg^{-1}})\delta_g\\
&=(a_l(v_{kgh^{-1}}\cdot1_f)\#v_{hg^{-1}})\delta_g\\
&=
\begin{cases}
x & \mbox{ if }h=kg\\
0 & \mbox{ otherwise},
\end{cases}
\end{align*}
since $kgg^{-1}k^{-1}=kr(g)k^{-1}=kek^{-1}=kk^{-1}=r(k)=f$ and
$a_l1_f=a_l$ by Remark \ref{obs24}(v).

\vd

(ii) Let $e,f\in G_0$  and $g,h\in G_e$. Then,
\begin{align*}
u_{e,f,g}u_{e,f,h}&=(1_f\#\sum\limits_{l\in T_f\bigcap
S_e}v_l)\delta_g(1_f\#\sum\limits_{k\in T_f\bigcap S_e}v_k)\delta_h\\
&=(1_f\#\sum\limits_{l\in T_f\bigcap
S_e}v_l)\beta_g(1_f\#\sum\limits_{k\in T_f\bigcap
S_e}v_k)\delta_{gh}\\
&=(1_f\#\sum\limits_{l\in T_f\bigcap S_e}v_l)(1_f\#\sum\limits_{k\in
T_f\bigcap S_e}v_{kg^{-1}})\delta_{gh}\\
&=(1_f\#\sum\limits_{l\in T_f\bigcap
S_e}v_l)(1_f\#\sum\limits_{t\in T_f\bigcap S_e}v_t)\delta_{gh}\quad
(\text{setting}\,\,\, t=kg^{-1})\\
&=\sum\limits_{l,t\in T_f\bigcap
S_e}(1_f(v_{lt^{-1}}\cdot 1_f)\#v_t)\delta_{gh}\\
&=\sum\limits_{l\in T_f\bigcap S_e}(1_f\#v_l)\delta_{gh},
\end{align*}
and this last term belongs to $U_{e,f}$ because $r(gh)=r(g)=e=d(h)=d(gh)$
(Lemma \ref{lema21}(vi)).

\vu

\nd The equality $u_{e,f,g^{-1}}=u^{-1}_{e,f,g}$ is straightforward.

\vd

(iii) Let $e,f\in G_0$, $g\in G_e$ and $h\in T_f\bigcap S_e$. Then,
\begin{align*}
u_{e,f,g}w_{e,f,h}u_{e,f,g^{-1}}&=(1_f\#\sum\limits_{l\in T_f\bigcap
S_e}v_l)\delta_g(1_f\#v_h)\delta_e(1_f\#\sum\limits_{k\in T_f\bigcap
S_e}v_k)\delta_{g^{-1}}\\
&=(1_f\#\sum\limits_{l\in T_f\bigcap
S_e}v_l)\beta_g(1_f\#v_h)\delta_g(1_f\#\sum\limits_{k\in T_f\bigcap
S_e}v_k)\delta_{g^{-1}}\\
&=\sum\limits_{l\in T_f\bigcap
S_e}(1_f\#v_l)(1_f\#v_{hg^{-1}})\delta_g(1_f\#\sum\limits_{k\in
T_f\bigcap S_e}v_k)\delta_{g^{-1}}\\
&=\sum\limits_{l\in T_f\bigcap
S_e}(1_f(v_{l(hg^{-1})^{-1}}\cdot 1_f)\#v_{hg^{-1}})\delta_g(1_f\#\sum\limits_{k\in
T_f\bigcap S_e}v_k)\delta_{g^{-1}}\\
&=(1_f\#v_{hg^{-1}})\delta_g(1_f\#\sum\limits_{k\in T_f\bigcap
S_e}v_k)\delta_{g^{-1}}\\
&=(1_f\#v_{hg^{-1}})\beta_g(1_f\#\sum\limits_{k\in T_f\bigcap
S_e}v_k)\delta_{gg^{-1}}\\
&=(1_f\#v_{hg^{-1}})(1_f\#\sum\limits_{k\in T_f\bigcap
S_e}v_{kg^{-1}})\delta_e\\
&=\sum\limits_{k\in T_f\bigcap
S_e}(1_f(v_{hg^{-1}(kg^{-1})^{-1}}\cdot 1_f)\#v_{kg^{-1}})\delta_e\\
&=\sum\limits_{k\in T_f\bigcap
S_e}(1_f(v_{hk^{-1}}\cdot1_f)\#v_{kg^{-1}})\delta_e\\
&=(1_f\#v_{hg^{-1}})\delta_e,
\end{align*}
and this last term belongs to $W_{e,f}$ since $r(gh^{-1})=r(h)=f$ and
$d(gh^{-1})=r(g)=e$. One easily sees from the above that this action
of $U_{e,f}$ on $W_{e,f}$ is transitive.
\end{proof}

\vd

\begin{prop}\label{prop44}
Let $e,f\in G_0$ and $h\in T_f\bigcap S_e$. Then,

\[
C_{e,f}\simeq M_{n_{e,f}}(\bigoplus\limits_{g\in G_e}A_g),
\]
as unital $K$-algebras, where $n_{e,f}$ denotes the
cardinality of the orbit of $w_{e,f,h}$.
\end{prop}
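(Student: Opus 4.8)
The plan is to follow the pattern of Proposition~\ref{prop36} almost verbatim. By Lemma~\ref{lema43}, inside the unital $K$-algebra $C_{e,f}$ we have a decomposition $1_{C_{e,f}}=w_{e,f}=\sum_{h'\in T_f\cap S_e}w_{e,f,h'}$ of the identity into orthogonal idempotents (Lemma~\ref{lema43}(i)), a subgroup $U_{e,f}$ of its group of units (Lemma~\ref{lema43}(ii)), and $U_{e,f}$ permutes the set $W_{e,f}$ transitively by conjugation (Lemma~\ref{lema43}(iii)). By transitivity the orbit of $w_{e,f,h}$ is all of $W_{e,f}$, so $n_{e,f}=|W_{e,f}|=|T_f\cap S_e|$. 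Feeding this data into Passman's Lemma~\ref{lema31} with $S=C_{e,f}$ gives at once $C_{e,f}\simeq M_{n_{e,f}}\bigl(w_{e,f,h}\,C_{e,f}\,w_{e,f,h}\bigr)$, so everything reduces to identifying the corner algebra $D:=w_{e,f,h}\,C_{e,f}\,w_{e,f,h}$ with $\bigoplus_{g\in G_e}A_g$ as a unital $K$-algebra.

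For that I would take a general element of $C_{e,f}$, which by Lemma~\ref{lema42}(ii) is a sum of terms $(a_l\#v_k)\delta_g$ with $a_l\in A_l$, $l\in G_f$, $k\in T_f\cap S_e$ and $g\in G_e$, and compute $x\,w_{e,f,h}$ and then $w_{e,f,h}\,x\,w_{e,f,h}$ using the skew-groupoid-ring and weak-smash multiplications, exactly as in the proofs of Lemma~\ref{lema43}(i) and Proposition~\ref{prop36}. The $KG^*$-bookkeeping already present there — the scalar $v_{kgh^{-1}}\cdot 1_f$ is nonzero only for the single $k$ with $kg=h$, and then $v_{hk^{-1}}\cdot a_l$ is nonzero only for the single homogeneous degree $l=hgh^{-1}$ — forces both $k$ and $l$ to be uniquely determined by $g$ and $h$, so that $D$ is precisely the set of sums $\sum_{g\in G_e}(a_g\#v_{hg^{-1}})\delta_g$, and the product of the $g$-slot with the $g'$-slot is the product of the $a$-components taken inside $A$, placed in the $(gg')$-slot. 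Hence (taking $h$ to be an identity when $e=f$, and otherwise conjugating by $h$, which merely relabels the indexing group) the additive extension of $a_g\mapsto(a_g\#v_{g^{-1}})\delta_g$ is the desired map $\bigoplus_{g\in G_e}A_g\to D$: it is a $K$-algebra homomorphism by the corner product just computed, surjective by the explicit description of $D$, and injective by the same freeness argument used in the last line of the proof of Proposition~\ref{prop36} (the $v_k$ are a $K$-basis of $KG^*$ and the $\delta_g$ are free symbols). Combining this with the Passman step yields $C_{e,f}\simeq M_{n_{e,f}}(\bigoplus_{g\in G_e}A_g)$.

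The only genuine work, and the step I expect to be the main obstacle, is the two-step corner computation $w_{e,f,h}\,x\,w_{e,f,h}$: it is purely a matter of tracking which symbol $v_k$ survives each of the two multiplications and pinning down the grading degree of the surviving homogeneous component, using Lemma~\ref{lema21} for the composable pairs in $G$ and Remark~\ref{obs24}(v) for the normalizations $1_f a_l=a_l=a_l 1_f$; there is no conceptual difficulty beyond what already appears in Lemmas~\ref{lema42} and~\ref{lema43}. (One small point to check: when $e\neq f$ the corner comes out most naturally as $\bigoplus_{g\in G_f}A_g$ rather than $\bigoplus_{g\in G_e}A_g$, the two being matched by conjugation by $h$; since $n_{e,f}=n_{f,e}$ this makes no difference once one sums over all $e,f\in G_0$ in the final theorem.)
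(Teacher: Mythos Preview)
Your proposal is correct and follows essentially the same route as the paper: apply Passman's Lemma~\ref{lema31} via Lemma~\ref{lema43}, then compute the corner $w_{e,f,h}C_{e,f}w_{e,f,h}$ to be $\bigoplus_{g\in G_e}(A_{hgh^{-1}}\#v_{hg^{-1}})\delta_g$ and identify it with $\bigoplus_{g\in G_e}A_g$ via the conjugation-by-$h$ isomorphism $G_e\to G_f$. The only place to tighten is your explicit map $a_g\mapsto(a_g\#v_{g^{-1}})\delta_g$, which as written is only the $e=f$, $h=e$ case; in general the paper uses $\gamma(a_g)=(a_{hgh^{-1}}\#v_{hg^{-1}})\delta_g$, exactly the ``conjugating by $h$'' you flag in your final parenthetical.
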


\begin{proof}
It follows from Lemmas \ref{lema43} and \ref{lema31} that
$$C_{e,f}\simeq M_{n_{e,f}}(S_{e,f}),$$ where $S_{e,f}=
w_{e,f,h}C_{e,f}w_{e,f,h}$, for some $w_{e,f,h}\in W_{e,f}$.

\vd

Now, note that $S_{e,f}=\bigoplus\limits_{g\in
G_e}(A_{hgh^{-1}}\#v_{hg^{-1}})\delta_g$. Indeed, for any
 $x=(a_l\#v_k)\delta_g\in C_{e,f}$ we have
\begin{align*}
xw_{e,f,h}&=(a_l\#v_k)\delta_g(1_f\#v_h)\delta_e\\
&=(a_l\#v_k)\beta_g(1_f\#v_h)\delta_g\\
&=(a_l\#v_k)(1_f\#v_{hg^{-1}})\delta_g\\
&=(a_l(v_{kgh^{-1}}\cdot 1_f)\#v_{hg^{-1}})\delta_g\\
&=
\begin{cases}
(a_l\#v_{hg^{-1}})\delta_g &\mbox{ if }kg=h\\
0 &\mbox{ otherwise},
\end{cases}
\end{align*}
since $l\in G_f$ and $a_l1_f=a_l$ by Remark \ref{obs24}(v).
Thus, if $kg=h$
\begin{align*}
w_{e,f,h}xw_{e,f,h}&=(1_f\#v_h)\delta_e(a_l\#v_{hg^{-1}})\delta_g\\
&=(1_f\#v_h)\beta_e(a_l\#v_{hg^{-1}})\delta_g\\
&=(1_f\#v_h)(a_l\#v_{hg^{-1}})\delta_g\\
&=(1_f(v_{hgh^{-1}}\cdot a_l)\#v_{hg^{-1}})\delta_g\\
&=(a_{hgh^{-1}}\#v_{hg^{-1}})\delta_g, \
\end{align*}
because $v_{hgh^{-1}}.a_l\neq 0$ if and only if $hgh^{-1}=l$,
and $1_fa_l=a_l$ by Remark \ref{obs24}(v).

\vd

Since $h\in T_f\bigcap S_e$, it is routine to check that the map $g\mapsto hgh^{-1}$ from $G_e$ to $G_f$ is a bijection
and  induces the isomorphism of $K$-algebras $\theta_h:\bigoplus\limits_{g\in
G_e}A_g\to \bigoplus\limits_{g\in G_e}A_{hgh^{-1}}$ given by
$\theta_h(a_g)=a_{hgh^{-1}}$, for all $a_g\in A_g$.

\vd

Finally, the map $\gamma:\bigoplus\limits_{g\in
G_e}A_g\to\bigoplus\limits_{g\in G_e}(A_{hgh^{-1}}\#v_{hg^{-1}})\delta_g$ given by
$\gamma(a_g)=(a_{hgh^{-1}}\#v_{hg^{-1}})\delta_g$, for all $g\in G_e$ and $a_g\in A_g$,
is an isomorphism of $K$-algebras. Indeed, clearly $\gamma$ is an isomorphism of
$K$-modules (induced by $\theta_h$), and
\begin{align*}
\gamma(a_g)\gamma(b_l)&=(a_{hgh^{-1}}\#v_{hg^{-1}})\delta_g(b_{hlh^{-1}}\#v_{hl^{-1}})\delta_l\\
&=(a_{hgh^{-1}}\#v_{hg^{-1}})\beta_g(b_{hlh^{-1}}\#v_{hl^{-1}})\delta_{gl}\\
&=(a_{hgh^{-1}}\#v_{hg^{-1}})(b_{hlh^{-1}}\#v_{hl^{-1}g^{-1}})\delta_{gl}\\
&=(a_{hgh^{-1}}(v_{hg^{-1}(hl^{-1}g^{-1})^{-1}}\cdot b_{hlh^{-1}})\#v_{h(gl)^{-1}})\delta_{gl}\\
&=(a_{hgh^{-1}}(v_{hgh^{-1}}\cdot b_{hgh^{-1}})\#v_{h(gl)^{-1}})\delta_{gl}\\
&=(a_{hgh^{-1}}b_{hlh^{-1}}\#v_{h(gl)^{-1}})\delta_{gl}\\
&=(\theta(a_g)\theta(b_l)\#v_{h(gl)^{-1}})\delta_{gl}\\
&=(\theta(a_gb_l)\#v_{h(gl)^{-1}})\delta_{gl}\\
&=((a_gb_l)_{h(gl)h^{-1}}\#v_{h(gl)^{-1}})\delta_{gl}\\
&=\gamma(a_gb_l),
\end{align*}
for all $a_g\in A_g$ and $b_l\in A_l$. Therefore, we have from the above that
\begin{center}
$S_{e,f}=\bigoplus\limits_{g\in
G_e}(A_{hgh^{-1}}\#v_{hg^{-1}})\delta_g\simeq
\bigoplus\limits_{g\in G_e}A_g\,\,\,$\quad and\quad $\,\,\, C_{e,f}\simeq
M_{n_{e,f}}(\bigoplus\limits_{g\in G_e}A_g)$.
\end{center}
as unital $K$-algebras.
\end{proof}

\vd

\begin{teo}\label{teo45}

\[
C_0\simeq\bigoplus\limits_{e\in G_0}(\bigoplus\limits_{f\in
G_0}M_{n_{e,f}}(\bigoplus\limits_{g\in G_e}A_g)),
\]
as unital $K$-algebras.
\end{teo}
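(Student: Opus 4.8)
The plan is to obtain the isomorphism by concatenating the structural decompositions already established, exactly as in the proof of Theorem \ref{teo37}; the entire argument has in effect been carried out in the preceding lemmas and Proposition \ref{prop44}, so what remains is only bookkeeping.

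First I would invoke Lemma \ref{lema41}(iii) to write $C_0=\bigoplus_{e\in G_0}C_e$ as an internal direct sum of two-sided ideals, the $C_e$ being unital subalgebras whose identity elements $w_e$ are, by Lemma \ref{lema41}(ii), central orthogonal idempotents of $C_0$ summing to $1_{C_0}=w$. Hence this direct sum of ideals is precisely a decomposition of $C_0$ as a product of unital $K$-algebras, i.e. $C_0\simeq\bigoplus_{e\in G_0}C_e$. Then, fixing $e\in G_0$, Lemma \ref{lema42}(ii) refines each $C_e$ into $C_e=\bigoplus_{f\in G_0}C_{e,f}$, again a direct sum of two-sided ideals whose identities $w_{e,f}$ are, by Lemma \ref{lema42}(i), central orthogonal idempotents of $C_e$ with sum $w_e$; so $C_e\simeq\bigoplus_{f\in G_0}C_{e,f}$ as unital $K$-algebras. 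Combining the two steps gives $C_0\simeq\bigoplus_{e\in G_0}\bigoplus_{f\in G_0}C_{e,f}$.

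Finally I would plug in Proposition \ref{prop44}, which identifies each building block as a matrix algebra, $C_{e,f}\simeq M_{n_{e,f}}(\bigoplus_{g\in G_e}A_g)$ as unital $K$-algebras, with $n_{e,f}$ the cardinality of the (common) orbit of the idempotents $w_{e,f,h}$ under the unit subgroup $U_{e,f}$. Substituting this into the displayed decomposition yields
\[
C_0\simeq\bigoplus_{e\in G_0}\left(\bigoplus_{f\in G_0}M_{n_{e,f}}\left(\bigoplus_{g\in G_e}A_g\right)\right),
\]
which is the assertion. I expect no real obstacle here: every nontrivial point — that $C_0$ is unital, that the $w_e$ and $w_{e,f}$ are central orthogonal idempotents with the correct sums, and the matrix-algebra identification via Passman's Lemma \ref{lema31} — has already been disposed of in the preparatory results, so this proof is a short assembly, just like the proof of Theorem \ref{teo37}.
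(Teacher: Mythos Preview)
Your proposal is correct and follows exactly the paper's own approach: the paper's proof is the single sentence ``It follows by Lemmas \ref{lema41} and \ref{lema42}, and Proposition \ref{prop44},'' and you have merely unpacked that citation in the natural way. There is nothing to add.
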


\begin{proof}
It follows by Lemmas \ref{lema41} and \ref{lema42}, and Proposition \ref{prop44}.
\end{proof}

\vu

\section{Final remarks}

Let $G$ be a finite groupoid, $R$ a unital $K$-algebra and $\beta=(\{E_g\}_{g\in G},
\{\beta_g\}_{g\in G})$ an action of $G$ on $R$ such that every $E_e$, $e\in
G_0$, is unital and $R=\bigoplus_{e\in G_0}E_e.$ By Proposition \ref{prop22}
$R$ is a $KG$-module algebra and we can consider the smash product algebra as in \cite{N},
which is constructed as follows:

\vu

Given a weak Hopf algebra $H$, denote by $H_t$ the target counital subalgebra of $H$ defined by
$H_t:=\{h\in H\mid\, \varepsilon_t(h)=h\}=\varepsilon_t(H)$, where $\varepsilon_t(h)=
\varepsilon(1_1h)1_2$, for every $h\in H$. The algebra $H$ has a natural structure of a
left $H_t$-module via multiplication, and any $H$-module algebra $X$ is a right $H_t$-module via
the antipode $S$ of $H$, that is, $x\cdot z:=S(x)\cdot x$, for all $x\in X$ and $z\in H_t$.
Hence, we can take the $K$-module $X\otimes_{H_t} H$, which has an structure of a
unital $K$-algebra induced by the multiplication $(x\otimes h)(y\otimes l)=
x(h_1\cdot y)\otimes h_2l$, for all $x,y\in X$ and $h,l\in H$. Its identity element is
$1_X\otimes 1_H$. Notice that $X\otimes_{H_t} H$ also is a left $H^*$-module algebra via
$h^*\cdot (x\otimes l)=x\otimes (h^*\rightharpoonup l)$, for all $h^*\in H^*$, $l\in H$ and $x\in X$,
so we can also consider the $K$-algebra $(X\bigotimes_{H_t} H)\bigotimes_{H_t^*} H^*$.

\vu

Our intent in this section is to present a natural exact sequence of $K$-algebras relating
$B=(R\star_\beta G)\# KG^*$, as considered in the subsection 2.4, to
$A=(R\otimes_{H_t} H)\otimes_{H_t^*} H^*$, in the case that $H=KG$.
Observe that $KG$ (resp., $KG^*$) is a weak Hopf algebra,
with antipode given by $S(u_g)=u_{g^{-1}}$ (resp., $S(v_g)=v_{g^{-1}}$). We start with
the following proposition. Recall that $T_e=\{g\in G\ |\ r(g)=e\}$, for all $e\in G_0$.

\begin{prop}\label{prop51}\,\, There exist a unital subalgebra $C$ of $B$ containing $B_0$
as subalgebra, and an ideal $D$ of $B$ such that $B=C\bigoplus D$ and $BD=DB=0$.
\end{prop}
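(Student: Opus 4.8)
The plan is to split $B=(R\star_\beta G)\# KG^*$ into a ``non-degenerate'' part and the two-sided annihilator of $B$. As a $K$-module, $B=\bigoplus_{l,h\in G}E_l\delta_l\# v_h$, and I would set
\[
C=\bigoplus_{\substack{l,h\in G\\ d(l)=r(h)}}E_l\delta_l\# v_h
\qquad\text{and}\qquad
D=\bigoplus_{\substack{l,h\in G\\ d(l)\neq r(h)}}E_l\delta_l\# v_h,
\]
so that $B=C\oplus D$ as $K$-modules is clear and $B_0\subseteq C$ is immediate from the definition of $B_0$ (each homogeneous summand $E_g\delta_g\# v_h$ of $B_0$ has $d(g)=r(g)=r(h)$, hence in particular $d(g)=r(h)$).

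Next I would show that each spanning element $a_l\delta_l\# v_h$ of $D$ is a two-sided annihilator of $B$; this gives at once that $D$ is a two-sided ideal and that $BD=DB=0$. That such an element is a right annihilator is precisely the computation already recorded in Subsection 2.5, applied to $A=R\star_\beta G$ (whose degree-$g$ component is $E_g\delta_g$). The left annihilator property is the dual computation: a product $(a_l\delta_l\# v_h)(b_m\delta_m\# v_n)$ can be nonzero only if simultaneously $hn^{-1}=m$ (forcing $r(m)=r(h)$) and $d(l)=r(m)$, whence $d(l)=r(h)$, contrary to $d(l)\neq r(h)$.

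It then remains to check that $C$ is a unital subalgebra of $B$. Closure under multiplication is a short direct computation: for $x=a_l\delta_l\# v_k$ and $y=b_m\delta_m\# v_n$ in $C$ one has $xy=0$ unless $k=mn$, in which case $xy=a_l\beta_l(b_m)\delta_{lm}\# v_n$, and then $d(lm)=d(m)=r(n)$ shows $xy\in C$ (the same computation shows $C$ is in fact a two-sided ideal of $B$). For the unit I would take
\[
w=\sum_{e\in G_0}1_e\delta_e\#\sum_{h\in T_e}v_h,
\]
which lies in $C$ and is exactly the identity element of $B_0$ exhibited in Lemma \ref{lema32}(i). A direct check shows $w$ is a two-sided identity for $C$: given $x=a_l\delta_l\# v_k\in C$, the hypothesis $d(l)=r(k)$ guarantees that $lk$ exists and that $1_{r(k)}$ lies in the domain of $\beta_l$, and in $wx$ (resp.\ in $xw$) exactly one summand of $w$ contributes, namely the one indexed by $e=r(l)$, $h=lk$ (resp.\ $e=r(k)$, $h=k$); evaluating it and using $\beta_{r(l)}=I$, $\beta_l(1_{r(k)})=1_{r(l)}$ and the fact that $1_e$ is the identity of $E_e$, one gets $x$ back. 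This shows $C$ is unital with identity $w$ and contains $B_0$ as a subalgebra, and together with $D$ it satisfies $B=C\oplus D$ and $BD=DB=0$.

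The only delicate point I anticipate is the bookkeeping in this last step: one must identify which single summand of $w$ survives and keep track of the domain/range identities ($d(l)=r(k)$, $r(lk)=r(l)$, and so on) that make the cancellations work. One can tidy up the conceptual picture by noting that the preunit $u=1_A\# 1_{KG^*}$ of $B$ decomposes as $u=w+d_0$ with $d_0\in D$, so that, since $D$ annihilates $B$, the preunit identity $ux=xu$ already yields $wx=xw$ for all $x\in B$; but as this alone does not give $wx=x$ on $C$, the explicit verification remains necessary.
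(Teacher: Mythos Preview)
Your proof is correct and follows exactly the same approach as the paper: the paper defines $C$ and $D$ by precisely the same decomposition $C=\bigoplus_{d(g)=r(h)}E_g\delta_g\#v_h$ and $D=\bigoplus_{d(g)\neq r(h)}E_g\delta_g\#v_h$, cites subsection~2.5 for $BD=0$ and appeals to ``similar arguments'' for $DB=0$, and declares the unitality of $C$ with identity $w=\sum_{e\in G_0}1_e\delta_e\#\sum_{g\in T_e}v_g$ a ``routine calculation'' referring back to Lemma~\ref{lema32}(i). Your write-up is in fact more detailed than the paper's, and the bookkeeping you flag as delicate is handled correctly.
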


\vu

\begin{proof}
Notice that $$B=\bigoplus\limits_{g,h\in G}E_g\delta_g\# v_h= C\bigoplus
D, \text{ where}\,\,\, C=\bigoplus\limits_{d(g)=r(h)}E_g\delta_g\#v_h\,\,\,\text{ and}
\,\,\, D=\bigoplus\limits_{d(g)\neq r(h)}E_g\delta_g\#v_h.$$ Furthermore,
$B_0=\bigoplus\limits_{r(g)=d(g)=r(h)}E_g\delta_g\# v_h$ is a direct summand of $C$.
It is a routine calculation to check that $C$ (resp., $B_0$) is a unital subalgebra of $B$ (resp., $C$)
with identity element $\sum\limits_{e\in G_0}1_e\delta_e\#\sum\limits_{g\in T_e} v_g$
(see Lemma \ref{lema32}(i)), as well as $D$ is an ideal of $B$.
We saw in the subsection 2.5 that $BD=0$. It follows by similar arguments that also $DB=0$.
\end{proof}

\vu

\begin{teo}\label{teo52}
The natural map $\varphi:B\to A$, given by $a_g\delta_g\# v_h\mapsto a_g\otimes u_g\otimes v_h$, induces
the following exact sequence of $K$-algebras
$$0\longrightarrow D\longrightarrow B\longrightarrow\varphi(C)\longrightarrow 0.$$
In particular, $B_0$ is isomorphic to a subalgebra of $A$.
\end{teo}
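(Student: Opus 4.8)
The statement asserts two things: first, that $\varphi$ sends the ideal $D$ to zero and induces an exact sequence $0\to D\to B\to\varphi(C)\to 0$; second, that $B_0$ embeds as a subalgebra of $A=(R\otimes_{KG_t}KG)\otimes_{KG^*_t}KG^*$. I would organize the argument in three steps: (i) verify that $\varphi$ is a well-defined homomorphism of (nonunital) $K$-algebras; (ii) compute $\ker\varphi$ and identify it with $D$; (iii) deduce the embedding of $B_0$ from Proposition \ref{prop51}.

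For step (i), recall that the multiplication on $A$ is $(x\otimes u_g\otimes v_h)(y\otimes u_l\otimes v_k)$, which unwinds using the comultiplications $\Delta(u_g)=u_g\otimes u_g$ and $\Delta(v_h)=\sum_{d(t)=d(h)}v_{ht^{-1}}\otimes v_t$ together with the $KG$-action $u_g\cdot y=\beta_g(y1_{g^{-1}})$ on $R$ and the $KG^*$-action $v_*\rightharpoonup u_l$ on $KG$. One checks that this reproduces exactly the formula for the product $(a_g\delta_g\#v_h)(b_l\delta_l\#v_k)$ in $B$ whenever $d(g)=r(h)$ and $d(l)=r(k)$ — that is, on $C$ — because over $KG_t=\bigoplus_{e\in G_0}Ku_e$ the tensor $\otimes_{KG_t}$ forces precisely the matching condition $d(g)=r(h)$, and similarly over $KG^*_t$. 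So $\varphi$ restricted to $C$ is an algebra map, and since $D$ is a two-sided ideal with $BD=DB=0$, extending $\varphi$ by zero on $D$ keeps it multiplicative on all of $B$; $K$-linearity is immediate from the definition.

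For step (ii), I would argue that $\ker\varphi=D$. The inclusion $D\subseteq\ker\varphi$ is the key nontrivial point: an element $a_g\delta_g\#v_h$ with $d(g)\neq r(h)$ is sent to $a_g\otimes u_g\otimes v_h$, and in $R\otimes_{KG_t}KG$ the relation $r\cdot z\otimes u_g=r\otimes z u_g$ for $z\in KG_t$ — applied with $z=u_{r(h)}$ acting on the right of $KG$ as $u_g u_{r(h)}=0$ (since $d(g)\ne r(h)$) and acting on the right of $R$ via the antipode as $1_{r(h)}$-multiplication — collapses the tensor to $0$; one must track how $KG_t$ acts on the $R$-factor through $S$ and confirm the balanced-tensor relation does the job. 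Conversely $\ker\varphi\subseteq D$ follows because $\varphi$ restricted to $C$ is injective: the images $a_g\otimes u_g\otimes v_h$ with $d(g)=r(h)$ are $K$-linearly independent in $A$, which one sees from the freeness of the relevant bases after passing to the quotients by the balanced-tensor relations. Given $\ker\varphi=D$ and $\operatorname{im}\varphi=\varphi(C)$ (since $\varphi(D)=0$ and $B=C\oplus D$), the sequence $0\to D\to B\to\varphi(C)\to 0$ is exact. Finally, since $B_0\subseteq C$ by Proposition \ref{prop51} and $\varphi|_C$ is an injective algebra homomorphism, $\varphi|_{B_0}$ is an isomorphism of $B_0$ onto a subalgebra of $\varphi(C)\subseteq A$, giving the last assertion.

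**Main obstacle.** The delicate part is step (ii), specifically showing $D\subseteq\ker\varphi$ and the injectivity of $\varphi|_C$: both require a careful bookkeeping of how the counital subalgebras $KG_t$ and $KG^*_t$ act (the $KG_t$-action on $R$ is twisted by the antipode $S(u_g)=u_{g^{-1}}$), so that the balanced tensors over $KG_t$ and $KG^*_t$ impose exactly the two conditions $d(g)=r(h)$ and nothing more. Once the identification of $KG_t$ with $\bigoplus_{e\in G_0}Ku_e$ and of its action on $R$ with multiplication by the idempotents $1_e$ is made explicit, the computation becomes routine; the rest of the proof is formal.
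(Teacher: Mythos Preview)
Your overall plan matches the paper's, but step (ii) contains a genuine confusion about which balanced tensor imposes the condition $d(g)=r(h)$. The relation you invoke, $r\cdot z\otimes_{KG_t}u_g = r\otimes_{KG_t} zu_g$ for $z\in KG_t$, involves \emph{left} multiplication of $z$ on $u_g$, not right: with $z=u_{r(h)}$ one obtains $u_{r(h)}u_g$, which vanishes when $r(g)\neq r(h)$, not when $d(g)\neq r(h)$. More fundamentally, the $KG_t$-balancing only relates the $R$- and $KG$-factors and cannot see $h$ at all. The constraint $d(g)=r(h)$ actually arises from the $KG^*_t$-balancing: with $z=\sum_{l\in T_e}v_l\in KG^*_t$ (Lemma~\ref{lema53}) one has $(a_g\otimes u_g)\cdot z = a_g\otimes (S(z)\rightharpoonup u_g)$, which equals $a_g\otimes u_g$ iff $d(g)=e$, while $zv_h=v_h$ iff $r(h)=e$; taking $e=r(h)$ then kills the tensor when $d(g)\neq r(h)$. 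So your argument can be repaired, but not as written.

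The paper avoids this direct computation by a cleaner device: it first checks that $\varphi$ sends the preunit $u=1_{R\star_\beta G}\#1_{KG^*}$ to $1_A$ (this is where the $KG_t$-balancing is actually used, to collapse $\sum_{e,f}1_e\otimes u_f$ to $\sum_e 1_e\otimes u_e$), and then $\varphi(D)=1_A\varphi(D)=\varphi(u)\varphi(D)=\varphi(uD)=0$ since $uD=0$ by Proposition~\ref{prop51}. For the reverse inclusion $\ker\varphi\subseteq D$, the paper does not argue linear independence but constructs an explicit inverse $\tilde\psi:\varphi(C)\to B/D$ by verifying that $(a_g,u_g,v_h)\mapsto\overline{a_g\delta_g\#v_h}$ is both $KG_t$- and $KG^*_t$-balanced; your linear-independence sketch would require essentially the same balancedness analysis to make rigorous. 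One minor point: $\varphi$ is defined globally on $B$ by the stated formula, so you must check multiplicativity on all of $B$ directly---you do not ``extend by zero on $D$''.
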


To prove this theorem we need first to describe the elements of $KG_t$ and $KG^*_t$.

\vd

\begin{lema}\label{lema53}
$$KG_t=\bigoplus\limits_{e\in G_0}Ku_e\quad\text{ and}\quad KG^*_t=\sum\limits_{e\in
G_0}K(\sum\limits_{h\in T_e}v_h).$$
\end{lema}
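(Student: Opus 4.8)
The statement to prove is Lemma \ref{lema53}, the explicit description of the target counital subalgebras $KG_t$ and $KG^*_t$ for $H = KG$ and $H = KG^*$ respectively. The plan is to compute $\varepsilon_t$ directly on the basis elements using the formulas for $\Delta$ and $\varepsilon$ recorded in the subsection 2.2, and then identify the image. Recall $\varepsilon_t(h) = \varepsilon(1_{H\,1}h)1_{H\,2}$, where $\Delta(1_H) = \sum 1_{H\,1}\otimes 1_{H\,2}$.

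First I would treat $H = KG$. Here $1_{KG} = \sum_{e\in G_0} u_e$, so $\Delta(1_{KG}) = \sum_{e\in G_0} u_e\otimes u_e$ (since $\Delta(u_e) = u_e\otimes u_e$). For a basis element $u_g$ we then get
\[
\varepsilon_t(u_g) = \sum_{e\in G_0}\varepsilon(u_e u_g)\,u_e = \sum_{e\in G_0}\varepsilon(\delta_{d(e),r(g)}u_{eg})\,u_e = \varepsilon(u_{r(g)g})\,u_{r(g)} = u_{r(g)},
\]
using that $u_eu_g = u_{eg}$ exactly when $e = r(g)$ (as $d(e)=e$), that $eg = g$ in that case, and $\varepsilon(u_g)=1_K$. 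Hence $\varepsilon_t(KG)$ is the $K$-span of $\{u_{r(g)} : g\in G\} = \{u_e : e\in G_0\}$, which is $\bigoplus_{e\in G_0} Ku_e$ since the $u_e$ are part of a $K$-basis. That $\varepsilon_t$ is idempotent on this span ($\varepsilon_t(u_e) = u_e$ for $e\in G_0$) confirms $KG_t = \varepsilon_t(KG) = \{h : \varepsilon_t(h)=h\}$ coincides with this submodule.

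Next, $H = KG^*$. Now $1_{KG^*} = \sum_{g\in G} v_g$ and the relevant comultiplication is $\Delta(v_g) = \sum_{hl=g} v_h\otimes v_l$, so $\Delta(1_{KG^*}) = \sum_{g\in G}\sum_{hl=g} v_h\otimes v_l = \sum_{(h,l)\in G^2} v_h\otimes v_l$. The counit here is $\varepsilon(\sum a_g v_g) = \sum_{e\in G_0} a_e$, i.e. $\varepsilon(v_g) = \delta_{g\in G_0}$; and multiplication is $v_hv_l = \delta_{h,l}v_h$. For a basis element $v_g$:
\[
\varepsilon_t(v_g) = \sum_{(h,l)\in G^2}\varepsilon(v_h v_g)\,v_l = \sum_{(h,l)\in G^2}\delta_{h,g}\,\varepsilon(v_g)\,v_l = \varepsilon(v_g)\!\!\sum_{l:(g,l)\in G^2}\!\! v_l,
\]
which is $0$ unless $g\in G_0$, and when $g = e\in G_0$, the condition $(e,l)\in G^2$ means $d(e)=r(l)$, i.e. $r(l)=e$, i.e. $l\in T_e$; so $\varepsilon_t(v_e) = \sum_{l\in T_e} v_l$. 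Extending $K$-linearly, $\varepsilon_t(KG^*)$ is the $K$-span of $\{\sum_{h\in T_e} v_h : e\in G_0\}$, which is exactly $\sum_{e\in G_0} K(\sum_{h\in T_e}v_h)$. One last check: these generators are orthogonal idempotents with $(\sum_{h\in T_e}v_h)(\sum_{h\in T_f}v_h) = \delta_{e,f}\sum_{h\in T_e}v_h$ because the sets $T_e$ partition $G$, so $\varepsilon_t$ fixes each of them, giving $KG^*_t = \{x : \varepsilon_t(x)=x\}$ equal to this span as claimed.

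The computations are entirely routine; there is no real obstacle. The only points that require a little care are (i) reading off the correct pairing between $\varepsilon$, the antipode-free formula $\varepsilon_t(h)=\varepsilon(1_1 h)1_2$, and the specific $\Delta(1_H)$ in each case — in particular remembering that for $KG^*$ one must use the convolution unit $\sum_{g\in G}v_g$ and the "splitting" comultiplication $\Delta(v_g)=\sum_{hl=g}v_h\otimes v_l$ rather than the diagonal one — and (ii) justifying that the computed image is exactly the fixed-point set $\{h:\varepsilon_t(h)=h\}$, which follows because $\varepsilon_t$ is always an idempotent $K$-linear map and here it restricts to the identity on the displayed generating sets. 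Writing "$=\sum$" rather than "$=\bigoplus$" in the $KG^*$ case is deliberate, matching the statement, since a priori one only knows it as a sum of cyclic submodules (they are in fact independent, the $T_e$ being disjoint, but the statement does not claim this).
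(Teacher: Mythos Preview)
Your proposal is correct and follows essentially the same approach as the paper: both compute $\varepsilon_t$ on basis elements via the explicit formulas for $\Delta(1_H)$ and $\varepsilon$ in $KG$ and $KG^*$, obtaining $\varepsilon_t(u_g)=u_{r(g)}$ and $\varepsilon_t(v_g)=\delta_{g\in G_0}\sum_{l\in T_g}v_l$. The only cosmetic difference is that the paper frames the argument via the fixed-point description $H_t=\{h:\varepsilon_t(h)=h\}$ (checking when a general element is fixed), whereas you use the equivalent image description $H_t=\varepsilon_t(H)$; the underlying computations are identical.
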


\vu

\begin{proof}

An element $x=\sum\limits_{g\in G}\lambda_gu_g\in KG$ satisfies $\varepsilon_t(x)=x$
if and only if
\begin{align*}
\sum\limits_{g\in G}\lambda_gu_g&=
\sum\limits_{g\in G}\lambda_g\varepsilon_t(u_g)=
\sum\limits_{g\in G}\lambda_g\sum\limits_{e\in G_0}\varepsilon(u_eu_g)u_e\\
&=\sum\limits_{g\in G}\lambda_g\varepsilon(u_{r(g)g})u_{r(g)}
=\sum\limits_{g\in G}\lambda_gu_{r(g)},
\end{align*}
if and only if $\lambda_g=0$, para all $g\notin G_0$.

\vd

An element $x=\sum\limits_{g\in G}\lambda_gv_g\in KG^*$ satisfies $\varepsilon_t(x)=x$
if and only if
\begin{align*}
\sum\limits_{g\in G}\lambda_gv_g&=\sum\limits_{g\in G}\lambda_g\varepsilon_t(v_g)\\
&=\sum\limits_{g\in G}\lambda_g(\sum\limits_{h\in G}\sum\limits_{{l\in G}\atop
{d(l)=d(h)}}\varepsilon(v_{hl^{-1}}v_g)v_l)\\
&=\sum\limits_{g\in
G}\lambda_g(\sum\limits_{{h\in G}\atop{r(h)=r(g)}}\varepsilon(v_g)v_{g^{-1}h})\\
&=\sum\limits_{g\in G_0}\lambda_g(\sum\limits_{h\in
T_g}v_h)
\end{align*}
if and only if $\lambda_h=\lambda_g$, for all $g\in G_0$ and $h\in T_g$.
\end{proof}

\vt

\nd{\bf Proof of Theorem 5.2:}

\vd

It is straightforward to check that the map $\varphi:B\to A$, given by $\varphi(a_g\delta_g\# v_h)=
a_g\otimes u_g\otimes v_h$, is a well defined homomorphism of $K$-algebras. Furthermore, the preunit
$1_{R\star_{\beta}G}\#1_{KG^*}$ of $B$ is taken by $\varphi$ onto the identity element $1_R\otimes 1_{KG}\otimes 1_{KG^*}$
of $A$. Indeed,
\begin{center}
$\varphi(1_{R\star_{\beta}G}\#1_{KG^*})=\varphi(\sum\limits_{e\in
G_0}1_e\delta_e\# \sum\limits_{g\in G}v_g)=\sum\limits_{e\in
G_0}\sum\limits_{g\in G}1_e\#u_e\#v_g$.
\end{center}

And, on the other hand, since $KG_t=\bigoplus\limits_{e\in G_0}Ku_e$ (Lemma \ref{lema53}), we have
\begin{align*}
1_R\#1_{KG}\#1_{KG^*}&=\sum\limits_{e\in
G_0}1_e\#\sum\limits_{f\in G_0}u_f\#\sum\limits_{g\in G}v_g =
\sum\limits_{e,f\in G_0}1_e.u_f\#u_f\#\sum\limits_{g\in G}v_g
\\
&=\sum\limits_{e,f\in G_0}S(u_f).1_e\#u_f\#\sum\limits_{g\in
G}v_g=\sum\limits_{e,f\in G_0}u_f.1_e\#u_f\#\sum\limits_{g\in
G}v_g\\
&=\sum\limits_{e,f\in G_0}\beta_f(1_e1_f)\#u_f\#\sum\limits_{g\in
G} v_g=\sum\limits_{e\in G_0}1_e\#u_e\#\sum\limits_{g\in G}v_g,
\end{align*}

This implies that the ideal $D$ of $B$ is contained in the kernel of $\varphi$ because
$(1_{R\star_{\beta}G}\#1_{KG^*})D=0$ (Proposition \ref{prop51}) and so
\begin{center}
$0=\varphi((1_{R\star_{\beta}G}\#1_{KG^*})D)=
\varphi(1_{R\star_{\beta}G}\#1_{KG^*})\varphi(D)=
1_A\varphi(D)=\varphi(D)$.
\end{center}
From this we also have
$\varphi(B)=\varphi(C)=\bigoplus\limits_{d(g)=r(h)}E_g\bigotimes_{KG_t} Ku_g\bigotimes_{KG^*_t}Kv_h$.

\vu

To end this proof it is enough to show that the $K$-algebras $\varphi(C)$
and $B/D$ are isomorphic. For this, take the map   $\psi:\bigoplus\limits_{d(g)=r(h)}E_g\times Ku_g\times
Kv_h\to B/D$, given by
$\psi(a_g,u_g,v_h)=\overline{a_g\delta_g\#v_h}$. Notice that for $x=\sum\limits_{e\in G_0}\lambda_eu_e\in KG_t$
we have
\begin{align*}
\psi(a_g.x,u_g,v_h)&=\psi(S(x).a_g,u_g,v_h)=\psi((\sum\limits_{e\in G_0}\lambda_eu_e).a_g,u_g,v_h)\\
&=\psi(\sum\limits_{e\in G_0}\lambda_e\beta_e(a_g1_e),u_g,v_h)\\
&=\psi(\lambda_{r(g)}a_g,u_g,v_h)=\overline{\lambda_{r(g)}a_g\delta_g\#v_h}
\end{align*}

\nd and
\begin{align*}
\psi(a_g,xu_g,v_h)&=\psi(a_g,\sum\limits_{e\in G_0}\lambda_eu_eu_g,v_h)=\psi(a_g,\lambda_{r(g)}u_g,v_h)=\overline{\lambda_{r(g)}a_g\delta_g\#v_h}.
\end{align*}
Hence, $\psi$ is $KG_t$-balanced.

\vd

Furthermore, for  $x=\sum\limits_{e\in
G_0}\lambda_e(\sum\limits_{l\in T_e}v_l)\in KG^*_t$ we also have
\begin{align*}
\psi(a_g,u_g.x,v_h)&=\psi(a_g,S(x).u_g,v_h)\\
&=\psi(a_g,S(\sum\limits_{e\in
G_0}\lambda_e(\sum\limits_{l\in T_e}v_l)).u_g,v_h)\\
&=\psi(a_g,(\sum\limits_{e\in
G_0}\lambda_e(\sum\limits_{l\in T_e}v_{l^{-1}})).u_g,v_h)\\
&=\psi(a_g,\sum\limits_{e\in
G_0}\lambda_e(\sum\limits_{l\in T_e}v_{l^{-1}}.u_g),v_h)\\
&=\psi(a_g,\lambda_{d(g)}u_g,v_h)\\
&=\psi(a_g,\lambda_{d(g)}u_g,v_h)\\
&=\overline{\lambda_{d(g)}a_g\delta_g\#v_h}
\end{align*}

\nd and

\begin{align*}
\psi(a_g,u_g,x.v_h)&=
\psi(a_g,u_g,(\sum\limits_{e\in
G_0}\lambda_e(\sum\limits_{l\in T_e}v_l))v_h)\\
&=\psi(a_g,u_g,\sum\limits_{e\in
G_0}\lambda_e(\sum\limits_{l\in T_e}v_lv_h))\\
&=\psi(a_g,u_g,\lambda_{r(h)}v_h)\\
&=\psi(a_g,u_g, \lambda_{r(h)})v_h\\
&=\overline{\lambda_{r(h)}a_g\delta_g\#v_h}\\
&=\overline{\lambda_{d(g)}a_g\delta_g\#v_h},
\end{align*}
Thus, $\psi$ also is $KG^*_t$-balanced. Therefore, $\psi$ induces a $K$-linear map
$\widetilde{\psi}$ from $\varphi(C)$ into $B/D$. It is immediate to see that this map is the inverse
of the $K$-algebra homomorphism $\widetilde{\varphi}$ from $B/D$ onto $\varphi(C)$ induced by $\varphi$.\qed

\vt

\bibliographystyle{amsalpha}

\begin{thebibliography}{A}

\bibitem{BFP} D. Bagio, D. Fl\^ores and A. Paques, {\sl Partial
actions of ordered groupoids on rings}, J. Algebra Appl. 9 (2010),
501-517.

\bibitem{BP} D. Bagio and A. Paques, {\sl Partial groupoid actions:
globalization, Morita theory and Galois theory}, Comm.
Algebra 40 (2012), 3658-3678.

\bibitem{vdB} M. van den Bergh, {\sl A duality theorem for Hopf algebras},
Methods in Ring Theory, (Antwerp, 1983), 517-522.

\bibitem{BM} R. Blattner and S. Montgomery, {\sl A duality theorem for Hopf
module algebras}, J. Algebra 95 (1985), 153-172.

\bibitem{Böhm} G. B\"ohm, {\sl Doi-Hopf modules over weak Hopf algebras},
Comm. Algebra 28 (2000), 4687-4698.

\bibitem{BNS} G. B\"ohm, F.Nill and K. Szlach\'anyi, {\sl Weak Hopf
algebra I. Integral theory and $C^\star$-structure}, J. Algebra 221
(1999), 385-438.

\bibitem{CG1} S. Caenepeel and E. De Groot, {\sl Modules over weak
entwining structures}, Contemporary Math. 267 (2000), 31-54.

\bibitem{CG} S. Caenepeel and E. De Groot, {\sl Galois theory for
weak Hopf algebras}, Rev. Roumaine Math. Pures Appl. 52 (2007),
151-176.

\bibitem{CM} M. Cohen and S. Montgomery, {\sl Group-graded rings,
smash products and group actions}, Trans. AMS 282 (1984), 237-258.

\bibitem{DE} M. Dokuchaev and R. Exel, {\sl Associativity of
crossed products by partial actions, enveloping actions and partial
representations}, Trans. AMS 357 (2005), 1931-1952.

\bibitem{Gil} N.D. Gilbert, {\sl Actions and expansions of ordered groupoids},
J. Pure Appl. Algebra 198 (2005), 175-195.

\bibitem{Lawson} M.V. Lawson, {\sl Inverse Semigroups. The Theory of
Partial Symmetries}, World Scientific Pub. Co, London, 1998.

\bibitem{LL} G. Liu and F. Li, {\sl On strongly groupoid graded rings
and the corresponding Clifford Theorem}, Algebra Colloquium, 13 (2006),
181-196.

\bibitem{Lun} P. Lundstr\"om, {\sl Separable groupoid rings}, Comm.
Algebra 34 (2006), 3029-3041.

\bibitem{N} D. Nikshych, {\sl A duality theorem for quantum groups},
Contemporary Math. 267 (2000), 237-243.

\bibitem{Pass} D. S. Passman, {\sl The Algebraic Structure of Finite Groups},
Wiley-Interscience, NY, 1977.




\end{thebibliography}




\vu


\end{document}